\date{\today}
\newcommand\bb[1]{{\mathbb     #1}}
\newcommand\C[1]{{\mathcal #1 }}
\newcommand\fk[1]{\mathfrak{#1}}
\newcommand\ovl[1]{\overline{#1}}
\newcommand\la{{\lambda}}
\newcommand\ep{{\epsilon}}
\newcommand\fg{{\mathfrak g}}
\newcommand\eg{{e.g. ~}}
\newtheorem{theorem}{Theorem}[section]
\newtheorem{corollary}[theorem]{Corollary}
\newtheorem{definition}[theorem]{Definition}
\newtheorem{example}[theorem]{Example}
\newtheorem{lemma}[theorem]{Lemma}
\newtheorem{proposition}[theorem]{Proposition}
\newtheorem{remark}[theorem]{Remark}
\newcommand\Ind{{\operatorname{Ind}}}
\begin{document}
\title{Admissible modules and normality of classical nilpotent orbits II}
\author{Dan Barbasch}
\author{Kayue Daniel Wong}

\address[Barbasch]{Department of Mathematics, Cornell University, Ithaca, NY 14853,
U.S.A.}
\email{barbasch@math.cornell.edu}

\address[Wong]{School of Science and Engineering, The Chinese University of Hong Kong, Shenzhen,
Guangdong 518172, P. R. China}
\email{kayue.wong@gmail.com}

\begin{abstract}
As a sequel to \cite{BW1}, we study the character formula of the Brylinski model $\C B(\ovl{\C O})$ for
classical nilpotent varieties $\ovl{\C O}$. As a consequence, one can compute the multiplicities of
all $K-$types of the ring of regular functions $R(\ovl{\C O})$ for all classical nilpotent varieties.
\end{abstract}

\maketitle
\setcounter{tocdepth}{1}

\section{Introduction}
Let $G = Sp(2n,\mathbb{C})$ or $SO(2n+\delta,\mathbb{C})$ ($\delta \in \{0,1\}$) be a complex classical
Lie group. 
In \cite{Br}, Brylinski defined a $(\fk g, K)-$module $\C B(\ovl{\C O})$ for all classical nilpotent orbits, 
whose $G\cong K_c-$spectrum (here the subscript $c$ denotes complexification) satisfies
$$\C B(\ovl{\C O})|_{K_c} \cong R(\ovl{\C O})|_G,$$
where $R(\ovl{\C O})$ is the ring of regular functions of $\ovl{\C O}$.
In \cite{BW1}, it is proved that $\C B(\ovl{\C O})$ 
is the cyclic submodule of a `deformed' unipotent representation $\C B(\C O)$
attached to $\C O$ satisfying $\C B(\C O)|_{K_c} \cong R(\C O)$ (see \eqref{eq:sunip} below).
Moreover, the diminutive $K_c-$type multiplicities of $\C B(\ovl{\C O})$ are obtained in \cite{BW1},
resulting in an alternative proof on the classification of (non-)normal classical nilpotent varieties in \cite{KP}.

\medskip
In this paper, we obtain the character formula of $\C B(\ovl{\C O})$. 
As a result, the multiplicities of all $G\cong K_c-$types of $R(\ovl{\C O})$ can be effectively computed.
An important tool of our study of $\C B(\ovl{\C O})$ is a reduction theorem, which 
reduces the problem of studying to $\C B(\ovl{\C O})$ to that 
of $\C B(\ovl{\C O_{gen}})$, where $\C O_{gen}$ is the {\it generic part} of $\C O$ (Theorem \ref{thm:reduce}).

\subsection{Notations}
We introduce the notations and terminologies needed for this paper. Many of them
are introduced in \cite{BW1}:

\smallskip
\noindent {\bf (a) Irreducible representations for complex Lie groups.} Let $G$ be a complex Lie group viewed as a real Lie
group with Cartan involution $\theta$. Consider $H=T\cdot A$
be the Cartan decomposition of the Cartan subgroup $H$ of $G$.
Then the {\it Langlands parameter} of any irreducible module 
$(\fg_c, K_c)-$modules is a pair $(\la_L;\la_R)$
 such that $\mu:=\la_L-\la_R$ is the parameter of a character of $T$, and $\nu:=\la_L+\la_R$ the
  $A-$character. 
	
The \textit{principal series representation}
associated to $(\la_L;\la_R)$ is the $(\mathfrak{g}_c, K_c)-$module
$$X(\lambda_L,\lambda_R) =\Ind_B^G(e^{\mu}\otimes e^\nu \otimes
1)_{K-finite},$$
where the symbol $\Ind$ refers to Harish-Chandra induction, and let
$\ovl{X}(\la_L;\la_R)$ be the unique irreducible subquotient of
$X(\la_L;\la_R)$ containing  the $K_c-$type with extremal weight
$\mu.$ This is called the {\it Langlands subquotient}. 
The infinitesimal character of $X(\la_L;\la_R)$ and $\ovl{X}(\la_L;\la_R)$, when $\fk g_c$ is identified with $\fk
g\times\fk g,$ is $(\la_L;\la_R).$ 

\smallskip
The case of the orthogonal group is dealt with via Clifford theory. We use Weyl's parametrization of the
finite dimensional representations of the orthogonal groups (see page 6 of \cite{AB}). The highest weight of a $K_c-$type 
will be denoted 
\begin{equation} \label{eq:weylo}
\mu = (a_1 \geq \dots \geq a_n|\pm 1)
\end{equation} 
whenever $a_n = 0$, and the Langlands quotients will acquire a $\pm$ whenever the
corresponding lowest $K_c-$types are in different irreducible quotients.

\smallskip
If we need to specify the group, the standard module and Langlands
quotient will acquire a subscript, \eg $X_G(\la_L;\la_R)$ or $\ovl{X}_G(\la_L;\la_R)$ 
(respectively $X_G(\la_L;\la_R| \pm 1)$ or $\ovl{X}_G(\la_L;\la_R|\pm 1)$ for orthogonal groups).

\medskip 

\noindent {\bf (b) Shorthand for parabolically induced modules.}
Let $G'$ be the Lie group with the same
type as $G$ of lower rank, and $\Psi$ be a representation of 
$G'$. We write
\[I^G\left(
  \begin{pmatrix}\la_L\\ \la_R\end{pmatrix} \boxtimes\cdots \boxtimes \begin{pmatrix} \la_L'\\ \la_R' \end{pmatrix}\boxtimes\Psi\right) :=
\mathrm{Ind}_{GL \times \cdots \times GL \times G'}^{G}\left(\ovl{X}_{GL}(\la_L;\la_R)
\boxtimes \cdots \boxtimes \ovl{X}_{GL}(
  \la_L';\la_R') \boxtimes \Psi\right).
\]
Similarly, we write 
\[
I^G\left(
   \begin{pmatrix}\la_L\\ \la_R\end{pmatrix} \boxtimes\cdots \boxtimes \begin{pmatrix} \la_L'\\ \la_R' \end{pmatrix} \right) :=
\mathrm{Ind}_{GL \times \cdots \times GL}^{G}\left(\ovl{X}_{GL}(\la_L;\la_R) 
\boxtimes \cdots \boxtimes \ovl{X}_{GL}(\la_L';\la_R')\right).
\]

\medskip

\noindent {\bf (c) Diminutive $K_c-$types.}
Let $G$ be a classical complex Lie group, and $V_{\mu}$ be the irreducible, finite-dimensional $K_c-$type with highest weight $\mu$ (or Weyl's parametrization \eqref{eq:weylo} for orthogonal groups). The
{\bf diminutive $K_c-$types} of $G$ are:
$$\begin{cases}
V_{(1^{k},0^{n-2k},-1^{k})}\ \  ( 0 \leq k \leq \lfloor \frac{n}{2} \rfloor) & \text{for}\ G = GL(n,\mathbb{C}) \\
V_{(1^{k},0^{n-k}|\ (-1)^k)}\ \ ( 0 \leq k \leq n)  & \text{for}\ G = O(2n+1,\mathbb{C})\\
V_{(1^{2k},0^{n-2k})}\ \ ( 0 \leq k \leq \lfloor \frac{n}{2} \rfloor) & \text{for}\ G = Sp(2n,\mathbb{C}) \\
V_{(1^{2k},0^{n-2k}|\ \pm 1)}\ \ ( 0 \leq k \leq \lfloor \frac{n}{2} \rfloor) \  & \text{for}\ G = O(2n,\mathbb{C})
\end{cases}.$$ 
In particular, the diminutive $K_c-$types are equal to $\wedge^{2\ell}\mathbb{C}^{2n+\delta}$ ($\delta \in \{0,1\}$) 
for orthogonal groups, and $\wedge^{2\ell}\mathbb{C}^{2n}/\wedge^{2\ell-1}\mathbb{C}^{2n+\delta}$ for symplectic groups.

Let $X$, $Y$ be two admissible $(\fg_c, K_c)-$modules. We write
$$X \ \approx\  Y$$ 
if $X$ and $Y$ have the same composition factors with diminutive lowest $K_c-$types (with multiplicities).
Note that this is a weaker notion than $\stackrel{dm}{\rightarrow}$ defined in \cite{BW1}.

\medskip

\noindent {\bf (d) Shorthand for Langlands parameters.} Let $a, A \in \frac{1}{2}\mathbb{Z}$ be such that $A - a \in \mathbb{Z}$. 
A {\it string} is an ascending sequence of numbers $(a \ldots A) := (a,a+1,\dots,A-1,A)$
(if $A - a < 0$ then the string is empty). 

For $x, y \in \mathbb{Z}_{\geq -1}$ of the same parity, define $\la[x,y]^+$ by the spherical Langlands parameter
\begin{equation} \label{eq:laxy}
\la[x,y]^+ := \begin{pmatrix} -\frac{y}{2} +1 \ldots \frac{x}{2}\\ -\frac{y}{2} +1 \ldots \frac{x}{2} \end{pmatrix}
\end{equation}
If $x \geq y$, we denote by $\la[x,y]^-$ the {\bf non-spherical} parameter 
$$
  \la[x,y]^- := \begin{pmatrix} -\frac{y}{2}+1 \ldots \frac{y}{2},  &\frac{y}{2}+1 \ldots \frac{x}{2} \\ 
	-\frac{y}{2} \ldots \frac{y}{2}-1, &\frac{y}{2}+1 \ldots \frac{x}{2} \end{pmatrix}.
$$
Note that 
\begin{equation} \label{eq:laxy2}
\begin{aligned}
\ovl{X}_{GL}(\la[x,y]^+) &= |\det|^{x-y},\\ 
\ovl{X}_{GL}(\la[x,y]^-) &= \mathrm{Ind}^{GL(\frac{x+y}{2})}_{GL(y) \times GL(\frac{x-y}{2})}(\det \boxtimes |\det|^y)\end{aligned}
\end{equation}
\medskip

\noindent {\bf (e) Nilpotent orbits.} A classical nilpotent orbit $\C O$ is denoted by the column sizes of its corresponding partition
\begin{equation} \label{eq:bcdo}
\C O = \begin{cases} 
(c_0 \geq c_1 \geq \dots \geq c_{2p} \geq c_{2p+1}) & \text{if}\ G = Sp(2n,\mathbb{C}), \\
(c_1 \geq c_2 \geq \dots \geq c_{2p} \geq c_{2p+1}) & \text{if}\ G = O(2n+\delta,\mathbb{C})\ (\delta \in \{0,1\})
\end{cases},
\end{equation}
such that $c_{2i-1} + c_{2i}$ is an even integer for all $i$. We say $\C O$ is even or odd if $c_i$ are all even or odd integers respectively.

Let $\tau(\C O) := \{i\ |\ c_{2i-1} = c_{2i}\}$. Define $\C O'$ by
$$
\C O' := \begin{cases} (c_0' \geq c_1' \geq \dots \geq c_{2q}' \geq c_{2q+1}') & \text{if}\ G' = Sp(2n',\mathbb{C}), \\
(c_1' \geq c_2' \geq \dots \geq c_{2q}' \geq c_{2q+1}')& \text{if}\ G' = O(2n'+\delta,\mathbb{C})
\end{cases}$$
where $\C O'$ is obtained from $\C O$ by removing all $c_{2i}, c_{2i+1}$ for $i \in \tau(\C O)$ 
in $\C O$. Then $\C O'$ is a nilpotent orbit in $\mathfrak{g}'$ of the same type as $\fg$ but of smaller rank. 

For example, let $\C O = (9,{\it 9,9},9,8,{\it 6,6},6,5,5,4,{\it 2,2,2,2},0)$ be a symplectic nilpotent orbit, then
$\tau(\C O) = \{1,3,6,7\}$ and $\C O' = (9,9,8,6,5,5,4,0).$

Using the above notations, 
\begin{equation} \label{eq:sunip}
\C B(\C O) := \mathrm{Ind}_{\prod_{i \in \tau(\C O)} GL(c_{2i}) \times G'}^{G}\left(|\det| \boxtimes \dots \boxtimes |\det| \boxtimes
\mathcal{U}(\mathcal{O}')\right),
\end{equation}
where $\mathcal{U}(\mathcal{O}')$ is the {\it spherical unipotent representation} attached to a classical nilpotent orbit $\C O'$
given by
$$\mathcal{U}(\mathcal{O}') := \begin{cases}
\overline{X}_{G'}\left(\la[c_0',c_1']^+, \dots,  \la[c_{2q},c_{2q+1}']^+\right) &\text{if}\ G' = Sp(2n',\mathbb{C})\\
\overline{X}_{G'}\left(\la[-\epsilon,c_1']^+, \la[c_2',c_3']^+, \dots,  \la[c_{2q},c_{2q+1}']^+ | +1\right) &\text{if}\ G' = O(2n'+\delta,\mathbb{C})\end{cases}$$

We now state the main results of \cite{BW1}:
\begin{theorem} \label{thm:1}
Let $\C O$ be as given in \eqref{eq:bcdo}. The Brylinski model $\C B(\ovl{\C O})$ in \cite{Br} is the cyclic submodule of $\C B(\C O)$ defined in \eqref{eq:sunip}.
\end{theorem}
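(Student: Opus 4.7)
The strategy is to construct an embedding $\C B(\ovl{\C O}) \hookrightarrow \C B(\C O)$ mirroring the geometric inclusion $R(\ovl{\C O}) \hookrightarrow R(\C O)$, and then to identify its image with the cyclic $(\fg_c,K_c)$-submodule generated by the spherical vector of $\C B(\C O)$.

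The first step is to verify $\C B(\C O)|_{K_c} \cong R(\C O)$. The orbit $\C O$ is birationally induced from $\C O'$ through the Levi $L = \prod_{i \in \tau(\C O)} GL(c_{2i}) \times G'$, and by known results on spherical unipotent representations one has $\C U(\C O')|_{K_c'} \cong R(\C O')$. The $|\det|$-shifts on the $GL$-factors in \eqref{eq:sunip} supply precisely the $A$-characters needed to realize sections of the structure sheaf pushed forward along this induction. Applying induction by stages, together with the Kraft--Procesi description of regular functions on an induced nilpotent orbit, gives the claimed $K_c$-spectrum.

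Next, let $v_0 \in \C B(\C O)$ denote the spherical vector corresponding to $1 \in R(\C O)$, and set $M := U(\fg_c) \cdot v_0$. Since $R(\ovl{\C O})$ is a $G$-stable subring of $R(\C O)$ and Brylinski's $(\fg_c,K_c)$-module structure on $R(\ovl{\C O})$ is compatible with the one inherited from $\C B(\C O)$, we obtain $M \subseteq \C B(\ovl{\C O})$.

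The reverse inclusion $\C B(\ovl{\C O}) \subseteq M$ is the main technical obstacle. It requires showing that every $K_c$-isotypic component of $R(\ovl{\C O})$ lies in the $U(\fg_c)$-orbit of $v_0$. I would approach this by first handling the diminutive $K_c$-types, whose multiplicities in both $M$ and $\C B(\ovl{\C O})$ can be extracted from the induced structure of \eqref{eq:sunip} and compared against the normality analysis of \cite{KP}, and then using them as generators to reach all other $K_c$-types by further $\fg_c$-translations. The hardest piece is the diminutive comparison itself, since when $\ovl{\C O}$ is non-normal one must carefully distinguish $R(\ovl{\C O})$ from the ring of regular functions on the normalization $\wti{\ovl{\C O}}$, and this is where the precise combinatorial data encoded in \eqref{eq:sunip} (the positions $\tau(\C O)$ and the $|\det|$-twists) plays its decisive role.
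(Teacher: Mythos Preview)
This theorem is not proved in the present paper: it is stated immediately after the sentence ``We now state the main results of \cite{BW1}'' and is quoted from the prequel without argument. There is therefore no proof here to compare your proposal against; the authors simply invoke Theorem~\ref{thm:1} as a known input (e.g.\ in the proof of Theorem~\ref{thm:reduce}).

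That said, a few remarks on your plan itself. The overall architecture is sensible: realize $\C B(\ovl{\C O})$ inside $\C B(\C O)$ via the inclusion $R(\ovl{\C O})\hookrightarrow R(\C O)$, then compare with the cyclic submodule generated by the spherical vector. But two points deserve caution. First, your inclusion $M\subseteq\C B(\ovl{\C O})$ presupposes that Brylinski's $(\fg_c,K_c)$-action on $R(\ovl{\C O})$ agrees with the restriction of the action on $\C B(\C O)$; this compatibility is exactly what links the geometric and representation-theoretic pictures, and it is not automatic---it is one of the substantive verifications carried out in \cite{BW1}. Second, for the reverse inclusion you propose to match diminutive $K_c$-types and then propagate; note that this is essentially the content of Theorems~\ref{thm:2} and~\ref{thm:3}, which in \cite{BW1} are proved \emph{after} Theorem~\ref{thm:1} and in part rely on it. So if you intend a self-contained argument, you would need an independent route to the diminutive multiplicities of the cyclic submodule $M$, not one that already assumes $M=\C B(\ovl{\C O})$.
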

To compute the diminutive $K_c-$type multiplicities of $\C B(\ovl{\C O})$, one has
\begin{theorem} \label{thm:2}
Let $\C O$ be as given in \eqref{eq:bcdo}, and
\begin{equation} \label{eq:gammao}
\Gamma(\C O) := \begin{cases}
I^G\left(\la[c_0,c_1]^+\boxtimes\cdots\boxtimes\la[c_{2p},c_{2p+1}]^+\right) & \text{if}\ G = Sp(2n,\mathbb{C}),   \\
I^G\left(\la[c_2,c_3]^+\boxtimes\cdots\boxtimes\la[c_{2p},c_{2p+1}]^+\boxtimes\ \mathrm{T}(c_1|+1) \right) & \text{if}\ G = O(2n+\delta,\mathbb{C})
\end{cases}
\end{equation}
where $\mathrm{T}(x|+1)$ and $\mathrm{T}(x|-1)$ are the trivial and sign representation of $O(x,\mathbb{C})$ respectively.
Then 
$$\Gamma(\C O) \approx \C B(\ovl{\C O}).$$
\end{theorem}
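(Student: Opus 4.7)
The plan is to combine the reduction Theorem \ref{thm:reduce} with a direct analysis of the generic case. Theorem \ref{thm:reduce} reduces $\C B(\ovl{\C O})$ to $\C B(\ovl{\C O_{gen}})$ via parabolic induction from certain $GL$-factors indexed by the pairs peeled off by $\tau(\C O)$. A parallel reduction is available on the $\Gamma$-side: in \eqref{eq:gammao}, any block $\la[c_{2i},c_{2i+1}]^+$ with $c_{2i}=c_{2i+1}$ collapses to a trivial character by \eqref{eq:laxy2}, and induction in stages extracts the corresponding $GL$-factors and leaves $\Gamma(\C O_{gen})$ as the residual. Because parabolic induction is exact and its effect on diminutive composition factors is governed by Frobenius reciprocity on $K_c$-types, the relation $\approx$ transfers through these inductions, so the theorem reduces to proving $\Gamma(\C O_{gen}) \approx \C B(\ovl{\C O_{gen}})$.

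In the generic case, \eqref{eq:sunip} gives $\C B(\C O_{gen})=\C U(\C O_{gen})$, the irreducible spherical unipotent representation, and its cyclic submodule $\C B(\ovl{\C O_{gen}})$ therefore equals $\C U(\C O_{gen})$. Moreover, $\Gamma(\C O_{gen})$ is by construction the spherical standard module whose unique spherical Langlands quotient is exactly $\C U(\C O_{gen})$. In the Grothendieck group,
$$
\Gamma(\C O_{gen})=\C U(\C O_{gen})+\sum_{j}m_j\,\ovl{X}\bigl(\la_L^{(j)};\la_R^{(j)}\bigr),
$$
with every summand non-spherical. The remaining task is to show that none of these subquotients contributes a diminutive lowest $K_c$-type.

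This last step is the main obstacle. The strategy is to enumerate the non-spherical composition factors of $\Gamma(\C O_{gen})$ via the intertwining-operator / Kazhdan-Lusztig analysis of spherical principal series at the unipotent infinitesimal character, and for each $\ovl{X}(\la_L^{(j)};\la_R^{(j)})$ to read off the extremal weight $\mu^{(j)}=\la_L^{(j)}-\la_R^{(j)}$ of its lowest $K_c$-type. One then compares each $\mu^{(j)}$ with the explicit diminutive list in \textbf{(c)}---the shapes $(1^k,0^{n-2k},-1^k)$, $(1^{2k},0^{n-2k})$, $(1^k,0^{n-k}|(-1)^k)$, and $(1^{2k},0^{n-2k}|\pm 1)$---and verifies that the non-spherical data imposed by the strings $\la[c_{2i},c_{2i+1}]^+$ forces either an entry of absolute value $\ge 2$ in $\mu^{(j)}$ or a parity pattern incompatible with these shapes. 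For orthogonal groups the extra $\pm 1$ sign from $\mathrm{T}(c_1|+1)$ versus $\mathrm{T}(c_1|-1)$ is tracked through Clifford theory, but the combinatorial mechanism is the same. This verification runs parallel to, and reuses, the diminutive-multiplicity computations performed in \cite{BW1} for $\C U(\C O)$, which already pin down exactly which diminutive $K_c$-types $\Gamma(\C O_{gen})$ must carry, thereby confirming that the remaining subquotients supply none.
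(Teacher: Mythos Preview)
Your argument has two genuine gaps, one logical and one mathematical.

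First, the reduction step is circular. You invoke Theorem~\ref{thm:reduce} to reduce to the generic case, but the proof of Theorem~\ref{thm:reduce} in the paper explicitly \emph{uses} Theorem~\ref{thm:2} (twice: once for $\C O_{gen}$ and once for $\C O$) to match diminutive multiplicities across the inclusion~\eqref{eq:genincl}. So you cannot cite Theorem~\ref{thm:reduce} as an input here. In the paper's actual logical order, Theorem~\ref{thm:2} is imported from \cite{BW1} as a prerequisite, and Theorem~\ref{thm:reduce} is a consequence.

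Second, and more seriously, your treatment of the generic case rests on a false premise. You assert that for generic $\C O_{gen}$ one has $\C B(\C O_{gen}) = \C U(\C O_{gen})$, i.e.\ that $\tau(\C O_{gen}) = \emptyset$, and hence that $\C B(\ovl{\C O_{gen}})$ is irreducible with a single diminutive lowest $K_c$-type. But ``generic'' only means that no column size is repeated \emph{more than twice}; pairs $c_{2i-1}=c_{2i}$ are still allowed, so $\tau(\C O_{gen})$ is typically nonempty. The running example $\C O=(8,6,6,4,4,2,2,0)$ in \eqref{eq:8664422} is even and generic, yet $\tau(\C O)=\{1,2,3\}$, $\C B(\C O)$ is a genuine induced module rather than $\C U(\C O)$, and Theorem~\ref{thm:main} shows $\C B(\ovl{\C O})$ has \emph{eight} composition factors $\C M(\C P,\ep(\C P))$, every one of them with diminutive lowest $K_c$-type. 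So the claim that ``none of the non-spherical subquotients contributes a diminutive lowest $K_c$-type'' is simply false for generic orbits, and the combinatorial verification you sketch at the end would fail. What is actually needed (and is the content of \cite{BW1}) is a direct comparison of diminutive $K_c$-type multiplicities between $\Gamma(\C O)$ and $\C B(\ovl{\C O})$, not a reduction to an irreducible case.
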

As proved in the Appendix B of \cite{BW1}, we also record the following, which will be studied in 
full detail in Section \ref{sec:factor} below:
\begin{theorem} \label{thm:3}
The lowest $K_c-$types of the composition factors of $\C B(\ovl{\C O})$ and $\C B(\C O)$ are diminutive.
\end{theorem}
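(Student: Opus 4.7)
Since Theorem \ref{thm:1} realizes $\C B(\ovl{\C O})$ as a cyclic submodule of $\C B(\C O)$, every composition factor of $\C B(\ovl{\C O})$ is also a composition factor of $\C B(\C O)$. It therefore suffices to prove the diminutive statement for $\C B(\C O)$, and the corresponding statement for $\C B(\ovl{\C O})$ follows for free.

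For $\C B(\C O)$ the plan is to induct on $|\tau(\C O)|$, the number of $GL$-factors appearing in the inducing data of \eqref{eq:sunip}. The base case $\tau(\C O)=\emptyset$ gives $\C B(\C O)=\C U(\C O)$, a spherical unipotent representation; in this case every composition factor is a Langlands subquotient whose parameter is controlled by the Lusztig quotient attached to $\C O$, and the associated lowest $K_c$-type can be read off from the partition data \eqref{eq:bcdo} and directly compared with the list in (c). This is essentially the unipotent-side input underlying the identification $\C B(\C O)|_{K_c}\cong R(\C O)$. For the inductive step, induction in stages writes
$$\C B(\C O) \;=\; \Ind_{GL(c_{2i})\times H}^{G}\bigl(|\det|\boxtimes \C B(\C O^{\circ})\bigr),$$
where $\C O^{\circ}$ is obtained from $\C O$ by removing a single equal pair $(c_{2i},c_{2i+1})$ and $H$ is the corresponding smaller classical group. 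By the inductive hypothesis every composition factor $X$ of $\C B(\C O^{\circ})$ has diminutive lowest $K_c$-type, so what must be analyzed is the behavior of composition factors under the one-step induction $\Ind(|\det|\boxtimes X)$. A useful a-priori bound is that each composition-factor lowest $K_c$-type of $\C B(\C O)$ must already appear in $\C B(\C O)|_{K_c}\cong R(\C O)$, which eliminates most candidates on sight.

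The main obstacle is to exclude composition factors whose Langlands parameter would force a non-diminutive lowest $K_c$-type. I would attack this through the Langlands classification: the reducibility of $\Ind(|\det|\boxtimes X)$ is controlled by poles of long intertwining operators, which are in turn located by the Langlands data of $X$ together with the standard $GL$-reducibility points; an explicit case analysis organized by the parity and the size of the columns $c_i$ should then show that every irreducible subquotient has its bottom $K_c$-type on the list in (c). Theorem \ref{thm:2}, which identifies the diminutive part of the composition series of $\C B(\ovl{\C O})$ with that of $\Gamma(\C O)$, provides a bookkeeping device for checking that no ``extra'' non-diminutive factor can sneak in with the same infinitesimal character. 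This is essentially the strategy carried out in Appendix B of \cite{BW1}.
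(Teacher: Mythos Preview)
The paper does not prove Theorem~\ref{thm:3} here; it is quoted from Appendix~B of \cite{BW1}. What the present paper does reveal about that argument is Proposition~\ref{prop:parastructure}: one writes down, for each $\C P\in Norm(\C O)$ and each $\ep\in\C A(\C P)^\vee$, the explicit Langlands parameter \eqref{eq:parameter} of every possible composition factor of $\C B(\C O)$ (and hence of $\C B(\ovl{\C O})$), and then checks directly that the lowest $K_c$-type of each $\C M(\C P,\ep)$ is diminutive. In other words, the actual approach is a \emph{classification}: enumerate the irreducibles at the relevant infinitesimal character that can occur, and inspect their lowest $K_c$-types one by one.

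Your proposal is organized differently, as an induction on $|\tau(\C O)|$. The reduction to $\C B(\C O)$ is fine, and the base case $\C B(\C O)=\C U(\C O)$ is trivially correct (a single irreducible with spherical lowest $K_c$-type). The problem is the inductive step. Your hypothesis records only that each composition factor $X$ of $\C B(\C O^{\circ})$ has diminutive lowest $K_c$-type; but to analyze the composition series of $\Ind(|\det|\boxtimes X)$ via intertwining operators you need the \emph{full Langlands parameter} of $X$, not just its lowest $K_c$-type. Two irreducibles with the same diminutive lowest $K_c$-type can behave very differently under this induction. So either the induction hypothesis must be strengthened to ``each composition factor of $\C B(\C O^{\circ})$ is one of the modules $\C M(\C P,\ep)$'', at which point you are back to the classification approach of Proposition~\ref{prop:parastructure}, or the argument as written has a genuine gap.

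Two further remarks. First, the a~priori bound ``the lowest $K_c$-type must occur in $R(\C O)$'' is true but does nothing: $R(\C O)$ contains many non-diminutive $K_c$-types, so this does not rule them out as lowest $K_c$-types of subquotients. Second, invoking Theorem~\ref{thm:2} cannot help exclude non-diminutive factors, since $\approx$ is by definition insensitive to anything non-diminutive; it is a bookkeeping device for diminutive multiplicities only.
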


\section{Reduction to Generic Orbits}
In this section, we define the notion of generic orbits, and prove that our study of $\C B(\ovl{\C O})$ 
can be reduced to that of generic orbits.

\begin{definition}
Let $\C O$ be a classical nilpotent orbit given in \eqref{eq:bcdo}. We say $\C O$ is {\bf non-generic} if there exists $c_i = c_{i+1} = c_{i+2}$ for some $i$. Otherwise, it is called {\bf generic}. 
In other words, $\C O$ is generic if and only if the number of repeated columns of $\C O$ is at most two.
\end{definition}
Let $N_{\mathcal{O}}(\ell):= \#\{i\ |\ c_i = \ell\}$ be the number of columns of $\C O$ with size equal to $\ell$.
The {\it generic part} $\C O_{gen}$ of $\C O$ is defined by the following: 
For each $\ell \in \mathbb{N}$,
$$N_{\mathcal{O}_{gen}}(\ell) = \begin{cases} 
1 & \text{if}\ N_{\mathcal{O}}(\ell) > 2\ \text{is odd}; \\
2 & \text{if}\ N_{\mathcal{O}}(\ell) > 2\ \text{is even};\\
N_{\mathcal{O}}(\ell) &  \quad \quad \text{otherwise}
\end{cases}$$
That is, $\C O_{gen}$ is obtained from $\C O$ by removing as few column pairs $c_{2i-1} = c_{2i}$ from $\C O$ as possible
such that $\C O_{gen}$ is generic.

For example, for the symplectic orbit $\C O = (9,\underline{9,9},9,8,\underline{6,6},6,5,5,4,\underline{2,2},2,2,0)$ above, 
$$\C O = \underline{(9,9)} \cup \underline{(6,6)} \cup \underline{(2,2)} \cup (9,9,8,6,5,5,4,2,2,0),$$
i.e. $\C O_{gen} = (9,9,8,6,5,5,4,2,2,0)$.
Note that $\C O_{gen}' = \C O'$, both equal to $(9,9,8,6,5,5,4,0)$.

\begin{theorem} \label{thm:reduce}
Let $\C O$ be classical nilpotent given in \eqref{eq:bcdo} with generic part $\C O_{gen}$. Suppose
$\C O = (g_1, g_1) \cup \dots \cup (g_k,g_k) \cup \C O_{gen}$.
Then
$$\C B(\ovl{\C O}) \cong I^G\left(\la[g_1,g_1]^+ \boxtimes \cdots \ \boxtimes \la[g_k,g_k]^+ \boxtimes\ \C B(\ovl{\C O_{gen}})\right).$$
\end{theorem}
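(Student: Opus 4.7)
The approach is to combine Theorem~\ref{thm:1} (the identification of $\C B(\ovl{\C O})$ as a cyclic submodule of $\C B(\C O)$) with induction in stages applied to~\eqref{eq:sunip}.

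Since removing the extra pairs $(g_1,g_1),\ldots,(g_k,g_k)$ does not affect $\C O'$, we have $\C O'=\C O_{gen}'$ and $\tau(\C O)=\tau(\C O_{gen})\sqcup\{j_1,\ldots,j_k\}$ where $j_\ell$ indexes the pair $(g_\ell,g_\ell)$. Transitivity of parabolic induction applied to~\eqref{eq:sunip} yields
$$
\C B(\C O)\;\cong\;\mathrm{Ind}_{\prod_{\ell=1}^{k}GL(g_\ell)\times G''}^{G}\bigl(|\det|\boxtimes\cdots\boxtimes|\det|\boxtimes\C B(\C O_{gen})\bigr),
$$
where $G''$ is the classical group of the same type as $G$ and appropriate rank carrying $\C B(\C O_{gen})$. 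By~\eqref{eq:laxy2}, $\ovl{X}_{GL}(\la[g_\ell,g_\ell]^+)=|\det|^0$ coincides, under the paper's normalization for complex groups, with the factor $|\det|$ appearing above, so both inductions take place over the same Levi with the same inducing data on the $GL$ factors.

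By Theorem~\ref{thm:1}, $\C B(\ovl{\C O})$ is the cyclic $(\fg_c,K_c)$-submodule of $\C B(\C O)$ generated by the lowest $K_c$-type $v$ (spherical in the symplectic case, or $(0|+1)$-type in the orthogonal case). Under the factorization above, Frobenius reciprocity identifies $v$ with $\mathbf{1}\boxtimes\cdots\boxtimes\mathbf{1}\boxtimes v_{gen}$, where $v_{gen}$ is the analogous generator of $\C B(\ovl{\C O_{gen}})\subset\C B(\C O_{gen})$. Exactness of parabolic induction then yields an embedding of the right-hand side $I^G\bigl(\la[g_1,g_1]^+\boxtimes\cdots\boxtimes\la[g_k,g_k]^+\boxtimes\C B(\ovl{\C O_{gen}})\bigr)$ into $\C B(\C O)$ whose image contains $v$, and hence contains $\C B(\ovl{\C O})=\mathrm{cyc}_G(\C B(\C O),v)$.

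For the reverse inclusion, I compare diminutive $K_c$-type multiplicities: induction in stages applied to~\eqref{eq:gammao} gives
$$
\Gamma(\C O)\;\cong\;I^G\bigl(\la[g_1,g_1]^+\boxtimes\cdots\boxtimes\la[g_k,g_k]^+\boxtimes\Gamma(\C O_{gen})\bigr).
$$
Combined with Theorem~\ref{thm:2} applied to both $\C O$ and $\C O_{gen}$, and Theorem~\ref{thm:3} (which guarantees that all composition factors of both sides have diminutive lowest $K_c$-types, so that the diminutive spectrum determines the composition series), this forces the two submodules of $\C B(\C O)$ to agree. The chief technical difficulty is precisely this commutation of the ``cyclic submodule'' operation with parabolic induction: a direct module-theoretic check that $\mathrm{cyc}_G\bigl(\mathrm{Ind}(|\det|^{\otimes k}\boxtimes M),v\bigr)=\mathrm{Ind}\bigl(\mathrm{triv}^{\otimes k}\boxtimes\mathrm{cyc}(M,v_{gen})\bigr)$ is delicate, as one must control how new composition factors arise under induction. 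The reduction to matching diminutive $K_c$-type multiplicities via Theorems~\ref{thm:2} and~\ref{thm:3} circumvents this by replacing the module-theoretic identity with a character-theoretic one that follows immediately from induction in stages applied to~\eqref{eq:gammao}.
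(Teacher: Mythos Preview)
Your proposal is correct and follows essentially the same route as the paper's proof. Both arguments obtain the inclusion $\C B(\ovl{\C O})\subseteq I^G\bigl(\la[g_1,g_1]^+\boxtimes\cdots\boxtimes\la[g_k,g_k]^+\boxtimes\C B(\ovl{\C O_{gen}})\bigr)$ from Theorem~\ref{thm:1} plus induction in stages (using $\C O'=\C O_{gen}'$), and then upgrade it to an equality by matching diminutive $K_c$-type multiplicities via Theorem~\ref{thm:2} applied to both $\C O$ and $\C O_{gen}$ together with the factorization $\Gamma(\C O)=I^G\bigl(\la[g_1,g_1]^+\boxtimes\cdots\boxtimes\la[g_k,g_k]^+\boxtimes\Gamma(\C O_{gen})\bigr)$, invoking Theorem~\ref{thm:3} to pass from $\approx$ to equality of submodules.
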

\begin{proof}
By Theorem \ref{thm:1}, $\C B(\ovl{\C O_{gen}}) \subseteq \C B(\C O_{gen})$ as a cyclic submodule. Using induction in stages, we have
\begin{align*}
&I^G\left(\la[g_1,g_1]^+ \boxtimes \cdots \ \boxtimes \la[g_k,g_k]^+ \boxtimes \C B(\ovl{\C O_{gen}})\right) \\
\subseteq\ &I^G\left(\la[g_1,g_1]^+ \boxtimes \cdots \ \boxtimes \la[g_k,g_k]^+ \boxtimes\ \C B(\C O_{gen})\right) \cong \C B(\C O)
\end{align*}
The last $\cong$ holds since $\C O_{gen}' = \C O'$, and all $\la[g_i,g_i]^+$ comes from $\la[c_{2i}, c_{2i-1}]^+$ for some $i \in \tau(\C O)$.
Therefore, we have the inclusion
\begin{equation} \label{eq:genincl}
\C B(\ovl{\C O}) \subseteq I^G\left(\la[g_1,g_1]^+ \boxtimes \cdots \ \boxtimes \la[g_k,g_k]^+ \boxtimes\ \C B(\ovl{\C O_{gen}})\right) \quad (\subseteq \C B(\C O)).
\end{equation}
since $\C B(\ovl{\C O})$ is a cyclic submodule of $\C B(\C O)$. 

To see the above inclusion is an equality, note that $\Gamma(\C O_{gen}) \approx \C B(\ovl{\C O_{gen}})$ by Theorem \ref{thm:2} and hence
$$\Gamma(\C O) = I^G\left(\la[g_1,g_1]^+ \boxtimes \cdots \ \boxtimes \la[g_k,g_k]^+ \boxtimes \Gamma(\C O_{gen})\right) \approx I^G\left(\la[g_1,g_1]^+ \boxtimes \cdots \ \boxtimes \la[g_k,g_k]^+ \boxtimes\ \C B(\ovl{\C O_{gen}})\right)$$
On the other hand, we have $\Gamma(\C O) \approx \C B(\ovl{\C O})$ by Theorem \ref{thm:2} again. 
Consequently, 
$$\C B(\ovl{\C O}) \approx I^G\left(\la[g_1,g_1]^+ \boxtimes \cdots \ \boxtimes \la[g_k,g_k]^+ \boxtimes\ \C B(\ovl{\C O_{gen}})\right)$$
have the same diminutive $K_c-$type multiplicities. 

As a result, the inclusion \eqref{eq:genincl} is an isomorphism on the level of diminutive $K_c-$types.
Now the result follows from Theorem \ref{thm:3}, since the lowest $K_c-$types of all composition factors of both modules of \eqref{eq:genincl} are diminutive.
\end{proof}

From now on, we assume our orbit $\C O = \C O_{gen}$ is generic, which has at most two repeated columns.

\section{Composition factors of $\C B(\ovl{\C O})$} \label{sec:factor}
In Appendix B of \cite{BW1}, a list of candidates of the composition factors of $\C B(\ovl{\C O})$
(and $\C B(\C O)$) is given. We recall them as follows:

\begin{definition} \label{def:funddegeneration}
Let $(b_0 \geq b_1 = b_2 \geq b_3)$ be four positive integers of the same parity. 
A {\bf fundamental degeneration} is defined by:

$\bullet\ (b_0>b_1=b_2>b_3) \rightarrow (b_0, b_1+2, b_2-2, b_3)$.

$\bullet\ (b_0=b_1=b_2>b_3)  \rightarrow (b_0+1, b_1+1, b_2-2, b_3)$.

$\bullet\ (b_0>b_1=b_2=b_3)  \rightarrow (b_0, b_1+2, b_2-1, b_3-1)$.

$\bullet\ (b_0=b_1=b_2=b_3)  \rightarrow (b_0+1, b_1+1, b_2-1, b_3-1)$.
\end{definition}
In the first two cases, we omit the columns $b_2-2,b_3$ 
if both terms are equal to zero. Note that when $b_0 > b_1$, 
the size of $b_0$ remains unchanged after degeneration. Similarly, 
if $b_2 > b_3$, the size of $b_3$ is the same after degeneration.

\begin{definition} \label{def:normo}
Let $\C O$ be a classical orbit given in \eqref{eq:bcdo} (take $c_0 = \infty$ for orthogonal orbits). We construct a collection of orbits as follows:

\medskip
\noindent (1)\ For each $\dots c_{2i} \geq c_{2i+1} = c_{2i+2} = \dots = c_{2j-1} = c_{2j} \geq c_{2j+1} \dots$ 
appearing in $\C O$, perform fundamental degeneration on the columns 
$c_{2i} \geq c_{2i+1} = c_{2j} \geq c_{2j+1}$
and get a new orbit:
$$(c_0 \geq \dots \geq c_{2i}'\geq c_{2i+1}' \geq c_{2i+2} = \dots = c_{2j-1}\geq c_{2j}'\geq c_{2j+1}'\geq \dots\geq c_{2p+1}).$$
\noindent (2)\ For each new orbit obtained in Step (1), repeat Step (1) on them until
  there are no more $c_{2j+1} = c_{2j+2}$'s.
		
\medskip
Denote the collection of all such orbits by $Norm(\C O)$. 
\end{definition}
Note that from the definition, $\C P \in Norm(\C O)$ may not be generic. This can also be seen
in the example below:
\begin{example}
Let $\C O = (8, 6, 6, 4, 4, 2, 2, 0)$ be a symplectic orbit. Then the $\C P \in Norm(\C O)$ are given by
\begin{equation} \label{eq:8664422}
  \begin{matrix}
    & &(8 6  6 4  4 2  2  0)&\\
		&&&\\
&(8 8 4 4  4 2  2  0)&(8 6  6  6 2 2  2  0)&(8 6  6 4  4  4)\\
&&&\\
&(8 8 5  5 2 2  2  0)&(8  8 4 4  4  4)&(8 6  6  6 3  3)\\
&&&\\
&         &(8  8 5  5 3  3)
  \end{matrix}.
	\end{equation}
\end{example}

We now list the candidates of the composition factors of $\C B(\ovl{\C O})$.
For simplicity, we only study the case when $\C O$ is {\bf even, generic}. 
In such a case, each $\C P \in Norm(\C O)$ is a special orbit in the sense of Lusztig.

\smallskip
The following Lemma is essential in the determination of the candidates of composition factors:
\begin{definition} \label{def:pprime}
Let $\C O$ be an {\bf even, generic} orbit (so that $\delta = 0$ for orthogonal groups). For each 
$${\C P} := \begin{cases} (d_0 \geq d_1 \geq \dots \geq d_{2s} \geq d_{2s+1}) & \text{if}\ G = Sp(2n,\mathbb{C}),\\
(d_1 \geq d_2 \geq \dots \geq d_{2s} \geq d_{2s+1}) & \text{if}\ G = O(2n,\mathbb{C})
\end{cases}$$
in $Norm(\C O)$, let $\tau_0(\C P) := \{i\ |\ d_{2i-1} = d_{2i}\ \text{is even}\} = \{i_1, \dots, i_r\}$ and 
$${\C P^*} := \begin{cases} (d_0^* \geq d_1^* \geq \dots \geq d_{2t}^* \geq d_{2t+1}^*) & \text{if}\ G^* = Sp(2n^*,\mathbb{C}),\\
(d_1^* \geq d_2^* \geq \dots \geq d_{2t}^* \geq d_{2t+1}^*)  & \text{if}\ G^* = O(2n^*,\mathbb{C})
\end{cases}$$
is obtained from $\C P$ by removing the columns $d_{2i-1} = d_{2i}$ for $i \in \tau_0(\C P)$. 
The {\bf parameter group} of $\C P$ is given by
$${\C A}(\C P) \cong (\mathbb{Z}/2\mathbb{Z})^{a_0} \times \dots \times (\mathbb{Z}/2\mathbb{Z})^{a_{t}}, \quad a_i := \begin{cases} 1 & \text{if}\ d_{2i+1}^*\ \text{is even and positive}, \\ 0 & \text{if}\ d_{2i+1}^*\ \text{is odd or}\ = 0 \end{cases}.$$
Consequently, all irreducible representations $\ep \in \C A(\C P)^{\vee}$ of the parameter group is
$$\ep = (\ep_1, \dots, \ep_{t+1}), \quad \ep_i \in \begin{cases} \{+,-\} & \text{if}\ a_i =1,\\
\{+\} & \text{if}\ a_i = 0.\end{cases}$$
\end{definition}
When $\C P$ is a symplectic orbit, $\C A(\C P) \cong \ovl{A}(\C P)$ is isomorphic to the Lusztig's quotient group of $\C P$ (as well as $\C P^*$). On the other hand, if $\C P$ is an orthogonal orbit,
then $\C A(\C P) \cong \ovl{A}(\C P) \times \mathbb{Z}/2\mathbb{Z}$. The extra $\mathbb{Z}/2\mathbb{Z}$
is due to the fact that $O(2n,\mathbb{C})$ has two connected components.

\begin{proposition} \label{prop:parastructure}
Retain the notations of Definition \ref{def:pprime}. For each $\C P \in Norm(\C O)$ and 
$\epsilon \in \C A(\C P)^{\vee}$, consider the induced modules 
\begin{equation}\label{eq:parameter}
\C M(\C P, \ep) :=\ I^G\left( \la[d_{2i_1},d_{2i_1-1}]^-\boxtimes \cdots \boxtimes \la[d_{2i_r},d_{2i_r-1}]^-\boxtimes\ \mathcal{U}(\mathcal{P}^*;\epsilon)\right) \quad 
\end{equation}
where $\mathcal{U}(\mathcal{P}^*;\epsilon)$ are the special unipotent representations attached to the special orbit $\C P^*$ given by
 $$\mathcal{U}(\C P^*, \ep) := \begin{cases} \ovl{X}\left(\la[d_0^*,d_1^*]^{\ep_0}, \cdots, \la[d_{2t}^*,d_{2t+1}^*]^{\ep_{t}}\right) & \text{if}\ G^* = Sp(2n^*,\mathbb{C}),\\ 
\ovl{X}\left(\la[d_2^*,d_3^*]^{\ep_1}, \cdots, \la[d_{2t}^*,d_{2t+1}^*]^{\ep_{t}}, \la[0,d_1^*]^+| {\ep_0}\right) & \text{if}\ G^* = O(2n^*,\mathbb{C})\end{cases}$$
where $\la[x,y]^{\pm}$ are as defined in \eqref{eq:laxy} if $x$ and $y$ are even, and 
$$\la[x,y]^{+} := \left(-\frac{x-1}{2} \ldots \frac{x-1}{2}\ ;\ -\frac{x-1}{2} \ldots \frac{x-1}{2}\right)$$ 
if $x=y$ is odd.

Then each $\C M(\C P, \ep)$ is an irreducible module with diminutive lowest $K_c-$type, and
$$\{\C M(\C P, \ep)\ |\ \epsilon \in \C A (\C P)^{\vee}\}$$
are the candidates of the composition factors of $\C B(\ovl{\C O})$.
\end{proposition}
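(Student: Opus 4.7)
The plan is to prove the two assertions of the proposition in turn, first irreducibility and diminutiveness for each individual module $\C M(\C P,\ep)$, and then the enumeration statement that these exhaust the candidate composition factors. I would invoke Theorem \ref{thm:2}, which tells us $\Gamma(\C O) \approx \C B(\ovl{\C O})$, so it suffices to identify the composition factors with diminutive lowest $K_c$-type appearing in the standard module $\Gamma(\C O)$.

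For the irreducibility and diminutiveness of $\C M(\C P,\ep)$, I would proceed in two stages. First, the special unipotent piece $\CU(\C P^*,\ep)$ attached to the special orbit $\C P^*$ is, by the Barbasch--Vogan theory of special unipotent representations for classical complex groups, irreducible with a diminutive lowest $K_c$-type; here the labels $\ep \in \C A(\C P)^\vee$ parametrize precisely these irreducible pieces sharing the same infinitesimal character. Second, the GL-factors $\ovl X_{GL}(\la[d_{2i_j},d_{2i_j-1}]^-)$ are, by \eqref{eq:laxy2}, principal series of the form $\mathrm{Ind}(\det \boxtimes |\det|^{d_{2i_j-1}})$, which have sign-type (diminutive) lowest $K_c$-type on the $GL$ side. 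Irreducibility of the full induced module $\C M(\C P,\ep)$ then follows from the standard reducibility criterion for complex classical groups: the coordinates of $\la[d_{2i_j},d_{2i_j-1}]^-$ sit in the half-integer lattice in a way that avoids the reducibility hyperplanes of $\CU(\C P^*,\ep)$, because the columns removed in passing from $\C P$ to $\C P^*$ are precisely the repeated pairs, and the infinitesimal character of $\CU(\C P^*,\ep)$ is determined by $\C P^*$. The diminutive lowest $K_c$-type of $\C M(\C P,\ep)$ is then the Cartan product (at the level of $K_c$) of the diminutive lowest $K$-type from $\CU(\C P^*,\ep)$ with the sign types contributed by each $GL$-factor, which remains diminutive by inspection in each of the four cases of case (c) of the notation.

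For the enumeration, the approach is combinatorial: I would analyze reducibility points of $\Gamma(\C O)$ by walking through the fundamental degenerations of Definition \ref{def:funddegeneration}. At each block $c_{2i} \geq c_{2i+1} = \cdots = c_{2j}\geq c_{2j+1}$ appearing in $\C O$, the parameter $\la[c_{2i},c_{2i+1}]^+ \boxtimes \cdots \boxtimes \la[c_{2j},c_{2j+1}]^+$ inside $\Gamma(\C O)$ lies on a wall of the Weyl chamber, and the corresponding intertwining operator produces a filtration whose subquotients are again standard modules attached to the degenerated partitions in $Norm(\C O)$. Iterating this, every composition factor of $\Gamma(\C O)$ whose lowest $K_c$-type is diminutive (which, by Theorem \ref{thm:3}, includes all composition factors of $\C B(\ovl{\C O})$) must have infinitesimal character equal to that of some $\C P \in Norm(\C O)$ and appear inside a standard module attached to $\C P$. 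Using the structure of Definition \ref{def:pprime}, the $L$-packet at that infinitesimal character is indexed by $\C A(\C P)^\vee$, and each irreducible in it is realized as one of the $\C M(\C P,\ep)$ by combining the sign $GL$-factors (which account for the repeated even-column pairs of $\C P$) with the special unipotent $\CU(\C P^*,\ep)$ attached to the reduced $\C P^*$.

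The main obstacle I anticipate is the enumeration step, and specifically matching the combinatorics of the procedure in Definition \ref{def:normo} with the actual wall-crossing behavior in the induced module $\Gamma(\C O)$. One has to verify carefully that the fundamental degenerations of Definition \ref{def:funddegeneration} exactly correspond to the nontrivial subquotients produced by the reducibility of $\Gamma(\C O)$ at each repeated block, and that no composition factor with diminutive lowest $K_c$-type escapes this enumeration. The bookkeeping between the parity conditions on $c_i$, the orthogonal versus symplectic cases, and the difference between $\C A(\C P)$ and the Lusztig quotient $\ovl A(\C P)$ (which differs by a factor $\mathbb{Z}/2\mathbb{Z}$ in the orthogonal case coming from the two components of $O(2n,\bC)$) is the delicate part; I would do this case by case using the example \eqref{eq:8664422} as a guiding model.
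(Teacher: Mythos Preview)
The paper does not give a proof of this proposition here. The opening sentence of Section~\ref{sec:factor} says explicitly that the list of candidate composition factors is \emph{recalled} from Appendix~B of \cite{BW1}, and Proposition~\ref{prop:parastructure} is stated without argument as a summary of that earlier work (together with the irreducibility and unipotent-representation results of \cite{B1} and \cite{BV2} on which \cite{BW1} relies). So there is no proof in the present manuscript to compare your proposal against.

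That said, your outline has the right shape but contains a few imprecisions you should repair. First, the assertion that a composition factor of $\Gamma(\C O)$ ``must have infinitesimal character equal to that of some $\C P \in Norm(\C O)$'' is vacuous: parabolic induction preserves infinitesimal character, so every composition factor of $\Gamma(\C O)$ already has the \emph{same} infinitesimal character, and all $\C P \in Norm(\C O)$ correspond to that single character. What actually distinguishes the various $\C M(\C P,\ep)$ is the Langlands parameter (equivalently, the lowest $K_c$-type and the grouping of coordinates into strings), not the infinitesimal character. Second, your claim that ``the $L$-packet at that infinitesimal character is indexed by $\C A(\C P)^\vee$'' is too strong: there are far more irreducibles at this infinitesimal character; what is true is that the special unipotent representations attached to $\C P^*$ are parametrized by the Lusztig quotient, and the $\C M(\C P,\ep)$ form only the relevant subfamily among all such irreducibles. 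Third, the irreducibility of $\C M(\C P,\ep)$ is not immediate from a generic-position argument, since the $GL$-strings $\la[d_{2i_j},d_{2i_j-1}]^-$ and the unipotent piece $\CU(\C P^*,\ep)$ share overlapping coordinates; the argument actually needed is the parabolic-induction irreducibility criterion for complex classical groups from \cite{B1}, applied to unipotent inducing data as in \cite{BV2}.
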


\begin{example}
Let $\C O = (8, 6, 6, 4, 4, 2, 2, 0)$ as in the previous example. For each $\C P \in Norm(\C O)$,
the columns ${\bf d_{2i-1} = d_{2i}}$ for $i \in \tau_0(\C P)$ are in {\bf bold}, so that the unbolded 
columns gives $\C P^*$. Then the 17 induced modules $\C M(\C P,\ep)$ in Equation \eqref{eq:parameter} are:
{\footnotesize
\[
  \begin{matrix}
    & &\begin{matrix}\C M(8{\bf 664422}0;+) = \\ 
		I^G({\bf \la[66]^-\boxtimes\la[44]^-\boxtimes\la[22]^-}\boxtimes\ \mathcal{U}(80;+)) \end{matrix}&\\
		&&&\\
		&&&\\
&\begin{matrix}\C M(884{\bf 4422}0;\pm+) =\\ 
I^G({\bf \la[44]^-\boxtimes\la[22]^-} \boxtimes \mathcal{U}(8840;\pm+))\end{matrix}&
\begin{matrix}\C M(8{\bf 66}62{\bf 22}0;\pm+) =\\ 
I^G({\bf \la[66]^-\boxtimes\la[22]^-} \boxtimes \mathcal{U}(8620;\pm+))\end{matrix}&
\begin{matrix}\C M(8{\bf 6644}4;\pm) =\\ 
I^G({\bf \la[66]^-\boxtimes\la[44]^-}\boxtimes \mathcal{U}(84;\pm))\end{matrix}\\
&&&\\
&&&\\
&
\begin{matrix}\C M(88552{\bf 22}0;\pm++) =\\ 
I^G({\bf \la[22]^-} \boxtimes \mathcal{U}(885520;\pm++))\end{matrix}&
\begin{matrix}\C M(884{\bf 44}4;\pm\pm) =\\ 
I^G({\bf \la[44]^-} \boxtimes \mathcal{U}(8844;\pm\pm))\end{matrix}&
\begin{matrix}\C M(8{\bf 66}633;\pm+) =\\ 
I^G({\bf \la[66]^-} \boxtimes \mathcal{U}(8633;\pm+))\end{matrix}\\
&&&\\
&&&\\
&         &\begin{matrix}\C M(885533;\pm++) =\\ 
\mathcal{U}(885533;\pm++)\end{matrix}
  \end{matrix}.
\]}
\end{example}

\begin{remark} \label{rmk:parastructure}
The $\C M(\C P, \ep)$ in the proposition above satisfies the following:

\noindent For $G = Sp(2n,\bb{C})$,
\begin{equation}
\C M(\C P, \ep) \approx 
I^G\left( \la[d_{2i_1},d_{2i_1-1}]^-\boxtimes \cdots \boxtimes \la[d_{2i_r},d_{2i_r-1}]^-\boxtimes \la[d_0^*,d_1^*]^{\ep_0} \boxtimes \cdots \boxtimes \la[d_{2t}^*,d_{2t+1}^*]^{\ep_{t}}\right).
\end{equation} 

\noindent For $G = O(2n,\bb{C})$,
\begin{equation}
\C M(\C P, \ep) \approx  I^G \begin{pmatrix}\la[d_{2i_1},d_{2i_1-1}]^-\boxtimes \cdots \boxtimes \la[d_{2i_r},d_{2i_r-1}]^- \boxtimes \quad \quad \quad \quad \\ 
\quad \quad \quad \la[d_2^*,d_3^*]^{\ep_1} \boxtimes \cdots \boxtimes \la[d_{2t}^*,d_{2t+1}^*]^{\ep_{t}} \boxtimes\ \mathrm{T}(d_1^*|\ep_0)\end{pmatrix}.
\end{equation} 
This can be seen by the calculations of the diminutive $K_c-$types of any
unipotent representations of classical groups in \cite{Wo3}, or Equation (16) of \cite{BW1}. For example,

\begin{align*}
\C M(88552{\bf 22}0;-++) &=I^G({\bf \la[2,2]^-} \boxtimes\ \mathcal{U}(885520;-++))\\
&\approx I^G\left(\la[2,2]^-\boxtimes\la[8,8]^-\boxtimes \la[5,5]^+\boxtimes \la[2,0]^+ \right).\\
&= \mathrm{Ind}_{GL(2)\times GL(8) \times GL(5) \times GL(1)}^{Sp(32)}\left(\det  \boxtimes \det \boxtimes \mathrm{triv} \boxtimes |\det| \right).
\end{align*}
where the last $=$ is given by \eqref{eq:laxy2} (in fact the $\approx$ here
happens also to be $=$, but we do not need this fact).
\end{remark}

\section{Distinguished modules} \label{sec:distinguished}
In the previous section, we have defined the set $Norm(\C O)$ and listed the parameters 
attached to each $\C P \in Norm(\C O)$ for all even, generic $\C O$. 
They are the candidates of the parameters of the composition 
factors of $\C B(\ovl{\C O})$.
We now define a distinguished parameter 
for each $\C P \in Norm(\C O)$. It will be proved 
in Theorem \ref{thm:main} that they are precisely the composition factors of $\C B(\ovl{\C O})$.

\begin{definition} \label{def:xbaro}
Let $\C O$ be an even, generic nilpotent orbit. For each $\C P \in Norm(\C O)$ with $\tau_0(\C P)$ and $\C P^*$ defined as in Definition \ref{def:pprime}, we say 
$$\ep(\C P) := \left(\ep(\C P)_0, \dots, \ep(\C P)_{t}\right) \in \C A(\C P)^{\vee}$$ 
is {\bf distinguished} in $\C P$ if the following holds
for all $i = 0, \dots, t$:
\begin{center}
If $d_{2i+1}^*$ even and positive, then $\ep(\C P)_i = (-1)^{\#\{j \in \tau_0(\C P) |\ d_{2i}^* \geq d_{2j} \geq d_{2i+1}^*\}}$
\end{center}
(take $d_0^* = \infty$ for orthogonal orbits). The irreducible module $\C M(\C P, \ep(\C P))$
is {\bf distinguished} in $\C P \in Norm(\C O)$.
\end{definition}
One can check directly from the definition that 
each $\C P \in Norm(\C O)$ contains exactly one
distinguished module. 

\begin{example}
Let $\C O = (8\geq 6\geq 6\geq 4\geq 4\geq 2\geq 2\geq 0)$ as above. Then the distinguished modules are:
{\small
\[
  \begin{matrix}
    & &\C M(8{\bf 664422}0;+)&\\
		&&&\\
&\C M(884{\bf 4422}0;++)&
\C M(8{\bf 66}62{\bf 22}0;-+)&
\C M(8{\bf 6644}4;+)\\
&&&\\
&
\C M(88552{\bf 22}0;+++)&
\C M(884{\bf 44}4;+-)&
\C M(8{\bf 66}633;-+)\\
&&&\\
&         &\C M(885533;+++)
  \end{matrix}.
\]}

For example, take $\C P = (d_0,d_1, d_2,{\bf d_3, d_4}, d_5) = (8,8,4,{\bf 4,4},4) \in Norm(\C O)$ with 
\begin{itemize}
\item $\tau_0(\C P) = \{2\}$ with $d_3 = d_4 = 4$.
\item $\C P^* = (d_0^*, d_1^*, d_2^*,d_3^*) = (8,8,4,4)$ with $\C A(\C P)^{\vee} = \{++, +-, -+,--\}$.
\end{itemize}
Now there are no $d_{2j}$, $j \in \tau_0(\C P)$ lying between $d_0^*$ and $d_1^* = 8$. Hence the sign is $+$. On the other hand, $d_4 = 4$ lies in between $d_2^*$ and $d_3^* = 4$. Hence
the sign is $-$. So the module $\C M(884444;+-) = I^G(\la[4,4]^- \boxtimes \mathcal{U}(8844;+-))$ is distinguished.
\end{example}

\begin{theorem} \label{thm:main}
Let $\C O$ be an even, generic nilpotent orbit. Then the sum of all distingished modules satisfies:
$$\C B(\ovl{\C O}) \approx \Gamma(\C O) \approx \bigoplus_{\C P \in Norm(\C O)} \C M(\C P, \ep(\C P)).$$
\end{theorem}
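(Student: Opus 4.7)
Since $\C B(\ovl{\C O}) \approx \Gamma(\C O)$ is already Theorem \ref{thm:2}, the task reduces to proving
$$\Gamma(\C O) \approx \bigoplus_{\C P \in Norm(\C O)} \C M(\C P, \ep(\C P)).$$
By Proposition \ref{prop:parastructure} and Theorem \ref{thm:3}, every composition factor of either side with diminutive lowest $K_c$-type already lies in the finite set $\{\C M(\C P, \ep) \mid \C P \in Norm(\C O),\ \ep \in \C A(\C P)^\vee\}$, so the remaining question is purely combinatorial: for each $\C P$, which parameters $\ep$ occur and with what multiplicities.

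I plan to argue by induction on $|\tau_0(\C O)|$, the number of even equal-column pairs of $\C O$. In the base case $|\tau_0(\C O)| = 0$ one has $Norm(\C O) = \{\C O\}$ and $\ep(\C O) = (+,\dots,+)$ by Definition \ref{def:xbaro}, so the target collapses to the standard identity $\Gamma(\C O) \approx \C U(\C O)$, namely that a spherical principal series at the infinitesimal character of a special unipotent representation contains $\C U(\C O)$ as its unique diminutive composition factor. For the inductive step I would isolate one equal pair $c_{2i-1} = c_{2i} = b$ of $\C O$ and apply a character identity on the two $GL$-factors carrying $\la[c_{2i-2},c_{2i-1}]^+$ and $\la[c_{2i},c_{2i+1}]^+$: the four cases of Definition \ref{def:funddegeneration} each contribute a spherical summand of the form $\la[\cdot,\cdot]^+ \boxtimes \la[\cdot,\cdot]^+$ on a degenerated orbit, together with one residual non-spherical summand involving $\la[b,b]^-$ that keeps $b$ as an equal pair. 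Inducing up to $G$, the spherical pieces feed the inductive hypothesis on orbits of strictly smaller $|\tau_0|$, while the non-spherical residue contributes the blocks with $b \in \tau_0(\C P)$ on the right-hand side. Rewriting each $\C M(\C P,\ep)$ via Remark \ref{rmk:parastructure} as an induced module from pure $\la[x,y]^{\pm}$ factors lets me compare both sides directly in the Grothendieck group modulo non-diminutive factors.

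The main obstacle will be tracking the signs prescribed by Definition \ref{def:xbaro}, namely $\ep(\C P)_i = (-1)^{\#\{j \in \tau_0(\C P)\,\mid\,d_{2i}^* \geq d_{2j} \geq d_{2i+1}^*\}}$. I would need to verify that each $\la[b,b]^-$ peeled off during the induction flips the sign $\ep_i$ exactly on those blocks $(d_{2i}^*,d_{2i+1}^*)$ whose closed interval contains $b$, and that these local flips accumulate into the global count above with no accidental cancellations or repetitions when different degeneration paths reach the same $\C P$. This sign bookkeeping is the combinatorial heart of the argument, with orthogonal groups needing extra care because of the additional $\bb{Z}/2\bb{Z}$ in $\C A(\C P)$ coming from the disconnectedness of $O(2n,\bb{C})$.
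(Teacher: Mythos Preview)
Your plan differs from the paper's in the induction variable: you induct on $|\tau_0(\C O)|$ and peel off an equal pair from the \emph{middle} of $\C O$, whereas the paper inducts on the number of column-pairs $p$ and appends $(c_{2k+2},c_{2k+3})$ at the \emph{end}. The paper's organization is what makes the bookkeeping close: for each $\C P_k \in Norm(\C O_k)$ one application of the $3$/$4$/$5$--column lemma to the single factor $\la[d_{2t}^*,d_{2t+1}^*]^{\ep(\C P_k)_t}\boxtimes\la[c_{2k+2},c_{2k+3}]^+$ already outputs the two distinguished modules $\C M(\C P_{k+1}^u,\ep(\C P_{k+1}^u))$ and $\C M(\C P_{k+1}^d,\ep(\C P_{k+1}^d))$, and the map $\C P_k\mapsto\{\C P_{k+1}^u,\C P_{k+1}^d\}$ is an explicit bijection onto $Norm(\C O_{k+1})$. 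No intermediate object outside the family of distinguished modules ever appears.

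Your scheme, as written, does not close for two concrete reasons. First, after one degeneration at a middle pair the spherical summand is $\Gamma(\C O'')$ with $\C O''\in Norm(\C O)$, but $\C O''$ need not be generic (e.g.\ $\C O=(6,4,4,2,2,0)\to\C O''=(6,6,2,2,2,0)$), so the inductive hypothesis for Theorem~\ref{thm:main} does not apply; you would have to strengthen the statement to all even orbits and verify that $Norm(\C O'')=\{\C P\in Norm(\C O):b\notin\tau_0(\C P)\}$ with matching distinguished $\ep$. Second, and more seriously, Lemma~\ref{lem:4} gives the non-spherical residue as $I^G(\la[b,b]^-\boxtimes\la[c_{2i-2},c_{2i+1}]^{\sigma_1}\boxtimes\text{rest of }\Gamma)$ with $\sigma_1=-$ whenever $c_{2i+1}>0$. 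This is \emph{not} $I^G(\la[b,b]^-\boxtimes\Gamma(\text{smaller orbit}))$: the extra factor $\la[c_{2i-2},c_{2i+1}]^-$ is non-spherical and straddles the removed pair, so the residue is outside the class of modules your induction hypothesis speaks about. To decompose it you would need the ``moreover'' halves of Lemmas~\ref{lem:4}--\ref{lem:5} and an enlarged inductive statement covering $\Gamma$-like modules with arbitrary signs on the factors --- which is exactly what the paper's length-induction sidesteps by only ever confronting one new sign at the rightmost position.
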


Since the composition factors of $\C B(\ovl{\C O})$ are all of the form $\C M(\C P,\ep)$ in Proposition \ref{prop:parastructure}. The above theorem implies:
\begin{corollary}
Retain the above notations. Then the composition factors of $\C B(\ovl{\C O})$ are precisely
$$\{ \C M(\C P, \ep(\C P))\ |\ \C P \in Norm(\C O)\},$$
all appearing with multiplicity one.
\end{corollary}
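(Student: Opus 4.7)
The first equivalence $\C B(\ovl{\C O})\approx\Gamma(\C O)$ is Theorem \ref{thm:2}, so the content of the statement lies in the second $\approx$. Proposition \ref{prop:parastructure} pins down the candidate $\approx$-composition factors: they lie in the finite set $\{\C M(\C P,\ep)\mid \C P\in Norm(\C O),\ \ep\in \C A(\C P)^\vee\}$, and Theorem \ref{thm:3} ensures that every composition factor of $\Gamma(\C O)$ with diminutive lowest $K_c$-type survives in the $\approx$-quotient. The theorem thus reduces to showing that, for each $\C P\in Norm(\C O)$, exactly one $\ep\in \C A(\C P)^\vee$ occurs and occurs with multiplicity one, and that this $\ep$ is the distinguished $\ep(\C P)$.

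The plan is to run an induction on the depth of $\C O$ in the degeneration poset underlying $Norm(\C O)$. In the base case $Norm(\C O)=\{\C O\}$, no site $c_{2i+1}=c_{2i+2}$ admits a valid fundamental degeneration, the Langlands parameters of the GL blocks in $\Gamma(\C O)$ are sufficiently separated that $\Gamma(\C O)$ is irreducible, and a direct comparison of Langlands data identifies it with $\C M(\C O,\ep(\C O))$. For the inductive step I would pick a site $i_0$ with $c_{2i_0+1}=c_{2i_0+2}$ and focus on the two consecutive GL blocks $\ovl{X}_{GL}(\la[c_{2i_0},c_{2i_0+1}]^+)\boxtimes\ovl{X}_{GL}(\la[c_{2i_0+2},c_{2i_0+3}]^+)$ inside $\Gamma(\C O)$. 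By \eqref{eq:laxy2} these two blocks are characters $|\det|^\alpha\boxtimes|\det|^\alpha$ on $GL(m)\times GL(n)$, and $\mathrm{Ind}^{GL(m+n)}_{GL(m)\times GL(n)}$ of this pair decomposes, in the Grothendieck group modulo $\approx$, as a signed combination of Langlands quotients in bijection with the four cases of Definition \ref{def:funddegeneration}. Propagating this local identity through the outer induction $I^G$ rewrites $\Gamma(\C O)$ modulo $\approx$ as a signed sum of $\Gamma$-type modules indexed by the one-step degenerations of $\C O$ at $i_0$, each of smaller degeneration depth, so the induction hypothesis applies to each summand.

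Iterating, one arrives at $\Gamma(\C O)\approx\sum_{\C P\in Norm(\C O)}\eta(\C P)\,\C M(\C P,\ep'(\C P))$, where $\eta(\C P)\in\bb{Z}$ is a signed count of degeneration paths $\C O\to\C P$ and $\ep'(\C P)\in \C A(\C P)^\vee$ is the accumulated sign character. The main obstacle is to verify that $\eta(\C P)=1$ for every $\C P$, i.e.\ that no cancellations between alternative degeneration orderings occur, and that $\ep'(\C P)=\ep(\C P)$. This reduces to a combinatorial identity about how signs propagate: every non-spherical block $\la[d_{2j},d_{2j-1}]^-$ arising from a ``$b_0=b_1$'' or ``$b_2=b_3$'' degeneration flips the sign of each unipotent block it crosses in the linear order of parameters, so the cumulative sign on the $i$-th $\C U$-block of $\C P^*$ equals $(-1)^{\#\{j\in\tau_0(\C P):\ d_{2i}^*\geq d_{2j}\geq d_{2i+1}^*\}}$, which is precisely $\ep(\C P)_i$ of Definition \ref{def:xbaro}. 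The fact that the recursion stays inside the $\approx$-quotient throughout is automatic from Theorem \ref{thm:3}, applied at each intermediate orbit. The orthogonal case follows the same outline, with the extra $\bb{Z}/2$ factor in $\C A(\C P)$ corresponding to the $\mathrm{T}(d_1^*|\ep_0)$ slot prescribed by Remark \ref{rmk:parastructure}.
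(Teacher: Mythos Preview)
Your framing of the reduction is correct: the Corollary follows from Theorem~\ref{thm:main} together with Proposition~\ref{prop:parastructure} (candidate factors) and Theorem~\ref{thm:3} (all factors have diminutive lowest $K_c$-type, so $\approx$ detects them). The substantive task is Theorem~\ref{thm:main}.

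However, your proposed inductive step contains several genuine errors. First, the two GL blocks $\ovl{X}_{GL}(\la[c_{2i_0},c_{2i_0+1}]^+)$ and $\ovl{X}_{GL}(\la[c_{2i_0+2},c_{2i_0+3}]^+)$ are characters with generally \emph{different} exponents $c_{2i_0}-c_{2i_0+1}$ and $c_{2i_0+2}-c_{2i_0+3}$, not a common $\alpha$. Second, the four items in Definition~\ref{def:funddegeneration} are mutually exclusive cases (depending on which equalities hold among $b_0,b_1,b_2,b_3$), not four outputs of a single decomposition. Third, and most seriously, the relevant identity (Lemmas~\ref{lem:3}--\ref{lem:5}) lives at the level of $I^G$, not $\mathrm{Ind}^{GL(m+n)}_{GL(m)\times GL(n)}$, and it is an unsigned \emph{direct sum} of exactly two pieces: one carrying a non-spherical block $\la[c,b]^-$ and one of $\Gamma$-type for the degenerated orbit. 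The first summand is not of $\Gamma$-type, so an induction on degeneration depth over $\Gamma$-modules does not recurse; you would need a strengthened hypothesis covering mixed $\pm$ blocks. The signed path-count $\eta(\C P)$ you worry about is an artifact of this wrong local picture, not a real obstacle.

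The paper avoids all of this by inducting on the \emph{length} $p$ rather than on degeneration depth. One writes $\Gamma(\C O_{k+1}) = I^G\bigl(\la[c_{2k+2},c_{2k+3}]^+\boxtimes\Gamma(\C O_k)\bigr)$, applies the inductive hypothesis to $\Gamma(\C O_k)$, and then invokes Remark~\ref{rmk:parastructure} to rewrite each $\C M(\C P_k,\ep(\C P_k))$ as a fully induced module. This leaves a single adjacent pair $\la[d_{2t}^*,d_{2t+1}^*]^{\ep(\C P_k)_t}\boxtimes\la[c_{2k+2},c_{2k+3}]^+$ (with $d_{2t+1}^*=c_{2k+1}$) at the small end, to which one application of Lemma~\ref{lem:3}, \ref{lem:4}, or \ref{lem:5} suffices. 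The output is exactly $\C M(\C P_{k+1}^u,\ep(\C P_{k+1}^u))\oplus\C M(\C P_{k+1}^d,\ep(\C P_{k+1}^d))$ when $c_{2k+1}=c_{2k+2}$ (or a single term otherwise), and the distinguished sign $\ep(\C P_{k+1})$ emerges directly from the $\sigma_1,\sigma_2$ in those lemmas --- no path-counting or cancellation argument is required.
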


To prove Theorem \ref{thm:main}, we need the following three lemmas, all of which
are (slight generalizations of) some special cases of Theorem \ref{thm:main}:
\begin{lemma}[``3--column Lemma''] \label{lem:3} 
Let $G$ be an orthogonal group, and $(a  > b = c \geq d)$ be even, non-negative integers with $a = \infty$. Suppose
$(a' > b' > c' \geq d')$ is obtained from $(a > b = c \geq d)$ by a fundamental
degeneration (Definition \ref{def:funddegeneration}). Then
$$I^G\left(\la[c,d]^+\boxtimes \mathrm{T}(b| \pm 1)\right) \approx I^G\left(\la[c,b]^-\boxtimes \mathrm{T}(d| \mp 1) \right) \oplus I^G\left(\la[c',d']^{+}\boxtimes \mathrm{T}(b'| \pm 1)\right).$$
%
\end{lemma}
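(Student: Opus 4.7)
The plan is to establish the identity by placing all three induced modules in a common principal series block and then matching their composition factors with diminutive lowest $K_c$-types. The first step is to verify that the three induced modules share the same infinitesimal character: the $\la[c,d]^+$ on $GL((c+d)/2)$ contributes the string $(-d/2+1,\ldots,c/2)$ (on both $\la_L$ and $\la_R$), while $\mathrm{T}(c|\pm 1)$ on $O(c)$ contributes the $\rho$-shift $(c/2-1,c/2-2,\ldots,0)$. For the first RHS term, $\la[c,c]^-$ contributes the two strings $(-c/2+1,\ldots,c/2)$ and $(-c/2,\ldots,c/2-1)$ together with the $O(d)$ $\rho$-shift $(d/2-1,\ldots,0)$. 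A direct check confirms all three infinitesimal characters coincide, and the same holds for the second RHS term via the degenerated columns.

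The crux of the argument is the reducibility of the LHS. Because $b=c$, the top value $c/2$ of the string for $\la[c,d]^+$ collides with the top value of the string coming from $\mathrm{T}(c|\pm 1)$; this collision forces a nontrivial long intertwining operator. Taking kernel and image of that operator yields, at the level of semisimplification on diminutive lowest $K_c$-types, a short exact sequence
\[
0 \longrightarrow A \longrightarrow I^G\left(\la[c,d]^+\boxtimes \mathrm{T}(c|\pm 1)\right) \longrightarrow B \longrightarrow 0.
\]
The subobject $A$ is obtained by absorbing a $\det$-twist arising from the full overlap of the two strings, rewriting the inducing data as $\la[c,c]^-\boxtimes \mathrm{T}(d|\mp 1)$; the sign flip $\pm\to\mp$ on the orthogonal factor is forced by Clifford theory, since the $\det$ character of $O(c)$ intertwines the two components of the disconnected group. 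The quotient $B$, after the overlap collapses, is identified with the module induced from the fundamentally degenerated columns $(c',d')$ together with $\mathrm{T}(b'|\pm 1)$. Since $\approx$ detects only the semisimplified composition factors with diminutive lowest $K_c$-types, the exact sequence yields the claimed direct sum identity.

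The main obstacle is the precise identification of $A$ and $B$ on diminutive $K_c$-types, including the sign bookkeeping on the orthogonal factor. The cases $c>d$ (fundamental degeneration of type $b_0>b_1=b_2>b_3$) and $c=d$ (type $b_0>b_1=b_2=b_3$) must be treated separately, because in the latter case the degenerated columns $(c-1,c-1)$ are odd and the odd-parameter version of $\la[c',c']^+$ introduced in Proposition \ref{prop:parastructure} must be used. In each subcase, the identification can be pushed to a small-rank computation by induction in stages, where Frobenius reciprocity combined with the explicit diminutive $K_c$-type multiplicities of spherical unipotent representations (Theorem \ref{thm:2}, cf.\ \cite{BW1}) furnishes a direct verification, and also ensures that no extra composition factor with a diminutive lowest $K_c$-type contaminates either $A$ or $B$.
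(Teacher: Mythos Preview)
Your approach is quite different from the paper's, and the intertwining-operator step is where it falls short. The paper constructs no exact sequence at all. For the $+1$ sign it simply observes that the left-hand side is literally $\Gamma(\mathcal{O})$ for the three-column orthogonal orbit $\mathcal{O}=(b,c,d)$, so Theorem~\ref{thm:2} gives $\Gamma(\mathcal{O})\approx\mathcal{B}(\overline{\mathcal{O}})$. Proposition~\ref{prop:parastructure} (drawing on Appendix~B of \cite{BW1}) then supplies the complete finite list of possible composition factors with diminutive lowest $K_c$-type, namely the modules $\mathcal{M}(\mathcal{O},\theta)$ for $\theta\in\mathcal{A}(\mathcal{O})^\vee$ and $\mathcal{M}(\mathcal{P},\chi)$ for $\chi\in\mathcal{A}(\mathcal{P})^\vee$. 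A direct Frobenius-reciprocity computation of diminutive $K_c$-type multiplicities of $\Gamma(\mathcal{O})$ against this candidate list---exactly the table in the worked example immediately following the lemma---then forces the answer uniquely. The $-1$ case is handled the same way.

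You never invoke Proposition~\ref{prop:parastructure}, so you have no a~priori candidate list. To compensate you posit a long intertwining operator whose kernel and image realize the two right-hand terms, but you do not specify which operator (between which two induced modules) this is, nor do you prove that its kernel and image are what you claim; the remarks about ``absorbing a $\det$-twist'' and the sign flip being ``forced by Clifford theory'' are heuristics, not arguments. Your last paragraph then falls back on the very same small-rank Frobenius-reciprocity check the paper uses---but that check only pins down the answer once the possible composition factors are already enumerated. Since $\approx$ is defined in terms of composition factors with diminutive lowest $K_c$-type (not merely raw $K_c$-type multiplicities), and you have not excluded other irreducibles in the block with diminutive lowest $K_c$-type, the multiplicity match by itself does not close the argument.
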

\begin{proof}
Consider the orthogonal orbit $\C O = (b,c,d)$. Then by Theorem \ref{thm:2},
$$\Gamma(\C O) = I^G\left(\la[c,d]^+\boxtimes \mathrm{T}(b| +1)\right) \approx \C B(\ovl{\C O})$$
Let $\C P := (b',c',d')$ so that $Norm(\C O) = \{\C O,\C P\}$, then the candidates of composition factors of $\C B(\ovl{\C O})$ are
\begin{equation} \label{eq:mod3}
\begin{aligned}
&\C M(\C O,\theta) \quad (\approx I^G\left(\la[c,b]^-\boxtimes \mathrm{T}(d | \theta)\right))  \quad &&\text{where}\ \theta \in \C A(\C O)^{\vee};\\ 
&\C M(\C P, \chi) \quad (\approx I^G\left(\la[c',d']^{\chi_1}\boxtimes \mathrm{T}(b'| \chi_2)\right)) \quad &&\text{where}\ \chi \in \C A(\C P)^{\vee}
\end{aligned}
\end{equation}
(the $\approx$ above comes from Remark \ref{rmk:parastructure}). Then the result follows by comparing the diminutive $K_c-$type multiplicities of $\Gamma(\C O) \approx \C B(\ovl{\C O})$ and the modules in \eqref{eq:mod3}. The proof is similar for $I^G(\la[c,d]^+\boxtimes \mathrm{T}(b| -1))$.
\end{proof}

\begin{example}
Let $\C O = (6,6,2)$. Then $\C P = (8,4,2)$ and we would like to study the composition factors of
$$I^G(\la[6,2]^{+} \boxtimes \mathrm{T}(6| \pm 1)).$$
The modules in the brackets of \eqref{eq:mod3} are
\begin{equation} 
I^G\left(\la[6,6]^-\boxtimes\ \mathrm{T}(2| \pm 1)\right) \quad \text{and} \quad I^G\left(\la[4,2]^{\pm}\boxtimes\ \mathrm{T}(8| \pm 1)\right),
\end{equation}
By Frobenius reciprocity, one can calculate the diminutive $K_c-$type multiplicities of these modules:
\begin{center}
  \begin{tabular}{ | c || c | c | c | c | c | c | c | c | c | c |}
    \hline
     & $\wedge^0\mathbb{C}^{14}$ & $\wedge^2\mathbb{C}^{14}$ & $\wedge^4\mathbb{C}^{14}$ & $\wedge^6\mathbb{C}^{14}$ & $\wedge^8\mathbb{C}^{14}$ & $\wedge^{10}\mathbb{C}^{14}$ & $\wedge^{12}\mathbb{C}^{14}$ & $\wedge^{14}\mathbb{C}^{14}$ \\ \hline 
    $I^G(\la[6,2]^+\boxtimes\mathrm{T}(6|+1))$ & $1$ & $1$ & $1$ & $1$ & $1$ & $0$ & $0$ & $0$ \\  
		$I^G(\la[6,2]^+\boxtimes\mathrm{T}(6|-1))$ & $0$ & $0$ & $0$ & $1$ & $1$ & $1$ & $1$ & $1$ \\ \hline  \hline
	
		$I^G(\la[6,6]^-\boxtimes\mathrm{T}(2|+1))$ & $0$ & $0$ & $0$ & $1$ & $0$ & $0$ & $0$ & $0$  \\ \hline
   $I^G\left(\la[6,6]^-\boxtimes\mathrm{T}(2| -1)\right)$ & $0$ & $0$ & $0$ & $0$ & $1$ & $0$ & $0$ & $0$  \\ \hline
	$I^G\left(\la[4,2]^+\boxtimes\mathrm{T}(8| +1)\right)$ & $1$ & $1$ & $1$ & $1$ & $0$ & $0$ & $0$ & $0$ \\ \hline
	$I^G\left(\la[4,2]^+\boxtimes\mathrm{T}(8| -1)\right)$ & $0$ & $0$ & $0$ & $0$ & $1$ & $1$ & $1$ & $1$ \\ \hline
	$I^G\left(\la[4,2]^-\boxtimes\mathrm{T}(8| +1)\right)$ & $0$ & $1$ & $1$ & $0$ & $0$ & $0$ & $0$ & $0$ \\ \hline
	$I^G\left(\la[4,2]^-\boxtimes\mathrm{T}(8| -1)\right)$ & $0$ & $0$ & $0$ & $0$ & $0$ & $1$ & $1$ & $0$ \\ \hline
  \end{tabular}
\end{center}
It is easy to see that 
$$I^G\left(\la[6,2]^+\boxtimes\mathrm{T}(6| +1)\right) \approx 
I^G\left(\la[6,6]^-\boxtimes\mathrm{T}(2| -1)\right) \oplus I^G\left(\la[4,2]^+\boxtimes\mathrm{T}(8| +1)\right)$$
and
$$I^G\left(\la[6,2]^+\boxtimes\mathrm{T}(6| -1)\right) \approx 
I^G\left(\la[6,6]^-\boxtimes\mathrm{T}(2| +1)\right) \oplus I^G\left(\la[4,2]^+\boxtimes\mathrm{T}(8| -1)\right).$$
\end{example}

\begin{lemma}[``4--column Lemma''] \label{lem:4} 
Let $G$ be a symplectic group, and $a \geq b = c \geq d$ be even, non-negative integers. Suppose
$a' \geq b' \geq c' \geq d'$ is obtained from $a \geq b = c \geq d$ by a fundamental
degeneration (Definition \ref{def:funddegeneration}). Then
$$I^G\left(\la[a,b]^+\boxtimes \la[c,d]^+\right) \approx I^G\left(\la[c,b]^-\boxtimes \la[a,d]^{\sigma_1} \right) \oplus I^G\left(\la[a',b']^{+}\boxtimes[c',d']^{+}\right),$$
where $\sigma_{1} := -$ if $d > 0$, or $+$ if $d =0$.

Moreover, if $a > b$, then the composition factors of 
$$I^G\left(\la[a,b]^-\boxtimes \la[c,d]^+\right) \approx I^G\left(\la[c,b]^-\boxtimes \la[a,d]^{\sigma_2} \right) \oplus I^G\left(\la[a',b']^{-}\boxtimes[c',d']^{+}\right),$$
where $\sigma_{2} := +$ if $d > 0$, or $+$ if $d =0$. 
\end{lemma}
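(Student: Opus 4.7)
The plan is to adapt the strategy of Lemma \ref{lem:3} to the four-column symplectic setting: introduce an auxiliary symplectic nilpotent orbit, use Theorem \ref{thm:2} together with Proposition \ref{prop:parastructure} to cut down the list of possible composition factors of the left-hand side, and then match diminutive $K_c$-type multiplicities on both sides via Frobenius reciprocity.

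For the first identity, take $\C O = (a,b,c,d)$, which is a valid symplectic orbit since $a\geq b=c\geq d$ are even and hence $b+c$ is even. Theorem \ref{thm:2} gives $\Gamma(\C O) = I^G(\la[a,b]^+ \boxtimes \la[c,d]^+) \approx \C B(\ovl{\C O})$. The set $Norm(\C O)$ consists of $\C O$ together with the single fundamental degeneration $\C P = (a',b',c',d')$, so Proposition \ref{prop:parastructure} and Remark \ref{rmk:parastructure} show that the candidate composition factors of $\C B(\ovl{\C O})$ are exactly the modules $\C M(\C O,\ep) \approx I^G(\la[c,b]^- \boxtimes \la[a,d]^\ep)$ for $\ep \in \C A(\C O)^\vee$, together with $\C M(\C P,\ep) \approx I^G(\la[a',b']^{\ep_0} \boxtimes \la[c',d']^{\ep_1})$ for $\ep \in \C A(\C P)^\vee$. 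Computing the diminutive $K_c$-type multiplicities of $\Gamma(\C O)$ and of each candidate via Frobenius reciprocity (in the spirit of the table following Lemma \ref{lem:3}) then isolates the two distinguished modules $\C M(\C O,\sigma_1)$ and $\C M(\C P,(+,+))$ as precisely the summands appearing on the right-hand side.

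For the second identity the module $I^G(\la[a,b]^- \boxtimes \la[c,d]^+)$ is not of the form $\Gamma(\widetilde{\C O})$ or $\C M(\widetilde{\C O},\ep)$ for any classical orbit $\widetilde{\C O}$: the $\Gamma$ construction in \eqref{eq:gammao} produces only $^+$-type strings, and the hypothesis $b=c$ forces any source orbit for $\C M$ to carry the pair $(b,c)$ inside $\tau_0$, where it is removed before $\mathcal{U}(\widetilde{\C O}^*,\ep)$ is formed. I therefore fall back on a direct computation: the diminutive $K_c$-type multiplicities of the left-hand side and of each putative summand on the right can be read off by Frobenius reciprocity using the explicit $GL$-realisations of $\ovl{X}_{GL}(\la[x,y]^\pm)$ from \eqref{eq:laxy2}, reducing the identity to a combinatorial comparison. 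The main obstacle, in both parts, is the case analysis across the four sub-cases of Definition \ref{def:funddegeneration} and the edge case $d=0$, together with the need to rule out every non-distinguished candidate $\C M(\C O,\ep)$ or $\C M(\C P,\ep)$ within each case; the natural format for this bookkeeping is a family of multiplicity tables in the style of the example following Lemma \ref{lem:3}.
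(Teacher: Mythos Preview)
Your approach is essentially the same as the paper's. For the first identity, the paper introduces the symplectic orbit $\C O=(a,b,c,d)$, identifies $Norm(\C O)=\{\C O,\C P\}$ with $\C P=(a',b',c',d')$, lists the candidate factors $\C M(\C O,\theta)\approx I^G(\la[c,b]^-\boxtimes\la[a,d]^\theta)$ and $\C M(\C P,\chi)\approx I^G(\la[a',b']^{\chi_1}\boxtimes\la[c',d']^{\chi_2})$, and then simply says ``the result follows by comparing the diminutive $K_c$-type multiplicities''---exactly your plan. Your treatment of the second identity is in fact more careful than the paper's: the paper makes no separate argument for the $\la[a,b]^-$ case, implicitly folding it into the same multiplicity comparison, whereas you correctly point out that this module is not a $\Gamma(\widetilde{\C O})$ and explain why a direct Frobenius computation still suffices for the $\approx$ claim.
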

\begin{proof}
Consider $\C O = (a,b,c,d)$. Then
$$\Gamma(\C O) = I^G(\la[a,b]^+ \boxtimes \la[c,d]^+) \approx \C B(\ovl{\C O}).$$
As in the previous Lemma, $Norm(\C O) = \{\C O, \C P := (a',b',c',d')\}$ and the candidates of the composition factors of $\C B(\ovl{\C O})$ are
\begin{equation} \label{eq:mod4}
\begin{aligned}
&\C M(\C O,\theta) \quad (\approx I^G\left(\la[c,b]^-\boxtimes\la[a,d]^{\theta}\right)), &&\text{where}\ \theta \in \ovl{A}(\C O)^{\vee};\\
&\C M(\C P,\chi) \quad (\approx I^G\left(\la[a',b']^{\chi_1}\boxtimes\la[c',d']^{\chi_2}\right)) &&\text{where}\ \chi \in \ovl{A}(\C P)^{\vee}.
\end{aligned}
\end{equation}
Then the result follows by comparing the diminutive $K_c-$type multiplicities of $\Gamma(\C O) \approx \C B(\ovl{\C O})$ and the modules in \eqref{eq:mod4}.
\end{proof}

\begin{example}
Let $\C O = (6,2,2,2)$ be non-generic orbit in $\mathfrak{sp}(12,\mathbb{C})$. Then $\C P = (6,4,1,1)$ and we study the composition factors of 
$$I^G(\la[6,2]^{\pm}\boxtimes\la[2,2]^+)$$
The modules in \eqref{eq:mod4} are
\begin{equation} 
I^G\left(\la[2,2]^-\boxtimes\la[6,2]^{\pm}\right) \quad \text{and} \quad I^G\left(\la[6,4]^{\pm}\boxtimes\la[1,1]^+\right),
\end{equation}
As before, one can calculate the diminutive $K_c-$type multiplicities of these modules:
\begin{center}
  \begin{tabular}{ | c || c | c | c | c |}
    \hline
     & $\wedge^0\mathbb{C}^{12}$ & $\wedge^2\mathbb{C}^{12}/\wedge^0\mathbb{C}^{12}$ & $\wedge^4\mathbb{C}^{12}/\wedge^2\mathbb{C}^{12}$ & $\wedge^6\mathbb{C}^{12}/\wedge^4\mathbb{C}^{12}$ \\ \hline
    $I^G(\la[6,2]^+\boxtimes\la[2,2]^+)$ & $1$ & $1$ & $1$ & $0$  \\
		$I^G(\la[6,2]^-\boxtimes\la[2,2]^+)$ & $0$ & $1$ & $1$ & $1$  \\  \hline  \hline 
   $I^G\left(\la[2,2]^-\boxtimes\la[6,2]^+\right)$ & $0$ & $1$ & $0$ & $0$  \\ \hline
	$I^G\left(\la[2,2]^-\boxtimes\la[6,2]^-\right)$ & $0$ & $0$ & $1$ & $0$ \\ \hline
	$I^G\left(\la[6,4]^+\boxtimes[1,1]^+\right)$ & $1$ & $1$ & $0$ & $0$  \\ \hline
	$I^G\left(\la[6,4]^-\boxtimes[1,1]^+\right)$ & $0$ & $0$ & $1$ & $1$ \\ \hline
  \end{tabular}
\end{center}
Once again, one can see easily that 
$$I^G\left(\la[6,2]^+\boxtimes\la[2,2]^+\right) \approx I^G\left(\la[2,2]^-\boxtimes\la[6,2]^-\right) \oplus I^G\left(\la[6,4]^+\boxtimes[1,1]^+\right)$$
and
$$I^G\left(\la[6,2]^-\boxtimes\la[2,2]^+\right) \approx I^G\left(\la[2,2]^-\boxtimes\la[6,2]^+\right) \oplus I^G\left(\la[6,4]^-\boxtimes[1,1]^+\right).$$
\end{example}

For the purpose of the induction arguments below, we record the following Lemma, whose proof is identical
to that of Lemma \ref{lem:4}:
\begin{lemma}[``5--column Lemma''] \label{lem:5} 
Let $G$ be an orthogonal group, and $z > a \geq b = c \geq d$ be even, non-negative integers. Suppose
$a' \geq b' \geq c' \geq d'$ is obtained from $a \geq b = c \geq d$ by a fundamental
degeneration (Definition \ref{def:funddegeneration}). Then
\begin{align*} &I^G\left(\la[a,b]^+\boxtimes \la[c,d]^+\boxtimes\ \mathrm{T}(z| +1)\right) \\
\approx\ &I^G\left(\la[c,b]^-\boxtimes \la[a,d]^{\sigma_1}\boxtimes\ \mathrm{T}(z| +1) \right) \oplus I^G\left(\la[a',b']^{+}\boxtimes[c',d']^{+}\boxtimes\ \mathrm{T}(z| +1)\right),
\end{align*}
where $\sigma_{1} := -$ if $d > 0$, or $+$ if $d =0$.

Moreover, if $a > b$, then the composition factors of 
\begin{align*} &I^G\left(\la[a,b]^-\boxtimes \la[c,d]^+\boxtimes\ \mathrm{T}(z| +1)\right) \\
\approx\ & I^G\left(\la[c,b]^-\boxtimes \la[a,d]^{\sigma_2}\boxtimes\ \mathrm{T}(z| +1) \right) \oplus I^G\left(\la[a',b']^{-}\boxtimes[c',d']^{+}\boxtimes\ \mathrm{T}(z| +1)\right),
\end{align*}
where $\sigma_{2} := +$ if $d > 0$, or $+$ if $d =0$. 
%
\end{lemma}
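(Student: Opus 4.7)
The plan is to mimic the proof of Lemma~\ref{lem:4} essentially verbatim, with the orthogonal orbit $\C O := (z, a, b, c, d)$ in the role of the symplectic $(a,b,c,d)$; the leading column $z$, being strictly greater than $a$, is isolated from the equality $b=c$ and contributes an inert $\mathrm{T}(z|+1)$ spectator factor in every induced module we consider.

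First I would apply Theorem~\ref{thm:2} to realize the LHS of the first claim as $\Gamma(\C O) = I^G\left(\la[a,b]^+ \boxtimes \la[c,d]^+ \boxtimes \mathrm{T}(z|+1)\right) \approx \C B(\ovl{\C O})$. Next I would compute $Norm(\C O)$: since $z > a$ bars any degeneration involving $z$, the only fundamental degeneration acts on $a \geq b = c \geq d$, yielding $\C P = (z, a', b', c', d')$. A direct check across the four cases of Definition~\ref{def:funddegeneration} confirms $b' \neq c'$, so no further iteration is possible and $Norm(\C O) = \{\C O, \C P\}$. Then Proposition~\ref{prop:parastructure} together with Remark~\ref{rmk:parastructure} supplies the candidate list
$$\C M(\C O, \theta) \approx I^G\left(\la[c,b]^- \boxtimes \la[a,d]^{\theta_1} \boxtimes \mathrm{T}(z|\theta_0)\right), \quad \C M(\C P, \chi) \approx I^G\left(\la[a',b']^{\chi_1} \boxtimes \la[c',d']^{\chi_2} \boxtimes \mathrm{T}(z|\chi_0)\right).$$

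The core step is the diminutive $K_c$-type comparison, analogous to the multiplicity tables in the examples after Lemmas~\ref{lem:3} and~\ref{lem:4}. Since $\mathrm{T}(z|+1)$ and $\mathrm{T}(z|-1)$ lie in different connected components of $O(2n,\bC)$, only candidates with $\theta_0 = \chi_0 = +$ can contribute to $\Gamma(\C O)$. Within these, a Frobenius reciprocity calculation (in which the extra $\mathrm{T}(z|+1)$ tensor factor simply shifts exterior-power degrees by a fixed amount) pins down the pair $\C M(\C O, (+, \sigma_1))$ and $\C M(\C P, (+,+,+))$, where $\sigma_1 = -$ if $d > 0$ and $\sigma_1 = +$ if $d = 0$. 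The dichotomy matches Definition~\ref{def:xbaro} applied to $\C O^* = (z, a, d)$: the sign $\sigma_1$ counts the $\tau_0$-column $c$ lying between $d_2^* = a$ and $d_3^* = d$, which contributes a $-1$ precisely when $d_3^* = d$ is even and positive. The second identity (starting from $\la[a,b]^-$ with $a > b$) is proved by the same $K_c$-type bookkeeping applied to the parallel candidate list; the uniform sign $\sigma_2 = +$ emerges because the analogous distinguished count collapses to $0$ once $\la[a,b]^-$ is already in place.

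The main obstacle I anticipate is the case analysis across the four subtypes of fundamental degeneration, particularly Cases 2 and 4 of Definition~\ref{def:funddegeneration}, where the degenerated columns become odd and alter the structure of $\C A(\C P)$ (removing a $\bZ/2\bZ$ factor from the middle slot). In every case, however, the $\mathrm{T}(z|+1)$ factor genuinely commutes through the relevant $GL$-inductions, so the orthogonal lemma reduces to an arithmetic shift of the Lemma~\ref{lem:4} multiplicity tables rather than a genuinely new computation.
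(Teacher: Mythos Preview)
Your proposal is correct and follows exactly the approach the paper intends: the paper's entire proof is the one-line remark that the argument ``is identical to that of Lemma~\ref{lem:4}'', and you have spelled out precisely that translation---take the orthogonal orbit $\C O=(z,a,b,c,d)$, apply Theorem~\ref{thm:2}, identify $Norm(\C O)=\{\C O,\C P\}$, list the candidates via Proposition~\ref{prop:parastructure}/Remark~\ref{rmk:parastructure}, and match diminutive $K_c$-type multiplicities. The only minor imprecision is the phrase ``lie in different connected components of $O(2n,\bC)$'' (the sign $\pm 1$ records the action of $O/SO$ on $K_c$-types rather than components of the module itself), but your conclusion that only $\theta_0=\chi_0=+$ survive, and the distinguished-sign computation leading to $\sigma_1$, are correct.
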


\bigskip
\noindent{\it Proof of Theorem \ref{thm:main}.} We now prove Theorem \ref{thm:main} by induction on the length of 
$$\C O = \begin{cases} (c_0, c_1, \dots, c_{2p}, c_{2p+1}) & \text{if}\ G = Sp(2n,\mathbb{C}),\\
(c_1, c_2, \dots, c_{2p}, c_{2p+1}) & \text{if}\ G = O(2n,\mathbb{C})
\end{cases}.$$ 

\subsection{Initial Step} Let $p = 1$ and $\C O_1 = (c_1,c_2,c_3)$ be an orthogonal orbit. Then
$$Norm(\C O_1) = \begin{cases} \{\C O_1\} & \text{if}\ c_1 \neq c_2;\\
\{\C O_1, \C P_1\} & \text{if}\ c_1 = c_2
\end{cases}$$
where $\C P_1 := (c_1',c_2',c_3')$ is obtained from $\C O_1$ by a fundamental degeneration (with $c_0 = \infty$). 
The distinguished modules are
$$\begin{cases}
\{\C M(\C O_1, \ep(\C O_1))\} & \text{if}\ c_1 \neq c_2; \\ 
\left\{\C M(\C O_1, \ep(\C O_1)) , \ \ \C M(\C P_1, \ep(\C P_1)) \right\} & \text{if}\ c_1 = c_2. 
\end{cases}$$
By Remark \ref{rmk:parastructure}, the sum of the distinguished modules are $\approx$ to
$$\begin{cases}
I^G(\la[c_2,c_3]^+\boxtimes\mathrm{T}(c_1|+1)) & \text{if}\ c_1 \neq c_2; \\ 
I^G(\la[c_2,c_1]^-\boxtimes\mathrm{T}(c_3|-1)) \oplus I^G(\la[c_2',c_3']^+\boxtimes\mathrm{T}(c_1'|+1)) & \text{if}\ c_1 = c_2. 
\end{cases}$$
In both cases, the sum is $I^G\left(\la[c_2,c_3]^+\boxtimes \mathrm{T}(c_1|+1)\right) = \Gamma(\C O_1)$ by the 3-column lemma (Lemma \ref{lem:3}). 

\medskip
Let $\C O_1 = (c_0,c_1,c_2,c_3)$ be a symplectic orbit. Then
$$Norm(\C O_1) = \begin{cases} \{\C O_1\} & \text{if}\ c_1 \neq c_2;\\
\{\C O_1, \C P_1\} & \text{if}\ c_1 = c_2
\end{cases}$$
where $\C P_1 = (c_0',c_1',c_2',c_3')$ is obtained from $\C O_1$ by a fundamental degeneration. 
As before, the sum of the distinguished modules are $\approx$ to
$$\begin{cases}
I^G(\la[c_0,c_1]^+|\la[c_2,c_3]^+ & \text{if}\ c_1 \neq c_2; \\ 
I^G(\la[c_2,c_1]^-|\la[c_0,c_3]^-) \oplus I^G(\la[c_0',c_1']^+|\la[c_2',c_3]^+) & \text{if}\ c_1 = c_2. 
\end{cases}$$
In both cases, the sum is $I^G\left(\la[c_0,c_1]^+\boxtimes \la[c_2,c_3]^+\right) = \Gamma(\C O_1)$ by the 4-column lemma (Lemma \ref{lem:4}). 

\subsection{Induction Step} Now suppose the proposition holds for $p = k-1$. We give a 
complete proof of the case of $p = k$ for orthogonal orbits, while the arguments for symplectic 
orbits is similar.

\medskip
Let $\C O_{k} = (c_1,c_2, \dots, c_{2k},c_{2k+1})$ be an orthogonal orbit with
\begin{equation} \label{eq:ok-1}
\Gamma(\C O_{k}) = I^G\left(\la[c_2,c_3]^+\boxtimes\cdots\boxtimes\la[c_{2k},c_{2k+1}]^+\boxtimes\ \mathrm{T}(c_1|+)\right) \approx 
\bigoplus_{\C P_{k} \in Norm(\C O_{k})} \C M(\C P_{k}, \ep(\C P_{k})).
\end{equation}
It is useful to keep in mind that the following holds:
\begin{itemize}
\item[(i)] $\C P_{k} = (d_1, d_2, \dots, d_{2s},d_{2s+1})$;
\item[(ii)] $\tau_0(\C P_{k}) = \{i_1, \dots, i_r\}$;
\item[(iii)] $\C P_{k}^* = (d_1^*, d_2^*, \dots, d_{2t}^*,d_{2t+1}^*)$;
\item[(iv)] $c_{2k+1} = d_{2s+1} = d_{2t+1}^*$;
\item[(v)] $\C A(\C P_{k}) = (\mathbb{Z}/2\mathbb{Z})^{a_0} \times  \dots \times (\mathbb{Z}/2\mathbb{Z})^{a_{t}}$.
\end{itemize}
By Remark \ref{rmk:parastructure}, $\C M(\C P_{k};\ep(\C P_{k}))$ satisfies
$$
 \C M(\C P_{k}, \ep(\C P_{k})) \approx 
 I^G\begin{pmatrix} \la[d_{2i_1},d_{2i_1-1}]^-\boxtimes \cdots \boxtimes \la[d_{2i_r},d_{2i_r-1}]^-\boxtimes  \quad\quad \quad \quad \quad \quad \quad\quad \quad \quad \\ 
\quad \quad\quad   \la[d_2^*,d_3^*]^{\ep(\C P_{k})_1} \boxtimes \cdots \boxtimes \la[d_{2t}^*,d_{2t+1}^*]^{\ep(\C P_{k})_{t}} \boxtimes\ \mathrm{T}(d_1^*| \ep(\C P_{k})_0)\end{pmatrix}
$$

Now consider $\C O_{k+1} = (c_1,c_2, \dots, c_{2k},c_{2k+1},c_{2k+2},c_{2k+3})$. By \eqref{eq:ok-1},
$$\Gamma(\C O_{k+1}) = I^G\left(\la[c_{2k+2},c_{2k+3}]^+\boxtimes\ \Gamma(\C O_{k})\right) \approx 
\bigoplus_{\C P_{k} \in Norm(\C O_{k})} I^G(\la[c_{2k+2},c_{2k+3}]^+\boxtimes\  \C M(\C P_{k}, \ep(\C P_{k})))$$
So we wish to study the sum on the right. There are two cases:

\medskip
\noindent \underline{(i) $c_{2k-1} \neq c_{2k}$:} For each  $\C P_{k} \in Norm(\C O_{k})$,
let
$$\C P_{k+1} := (d_1, d_2, \dots, d_{2s}, d_{2s+1}, c_{2k+2}, c_{2k+3}).$$
Then it is easy to see from the definition of $Norm(\C O)$ that
\[
Norm(\C O_{k+1}) = \bigcup_{\C P_{k} \in Norm(\C O_{k})} \{ \C P_{k+1}\}.
\]
We now look at $\C P_{k+1}$ more closely: $\tau_0(\C P_{k+1}) = \tau_0(\C P_{k})$ since $d_{2s+1} = c_{2k+1} \neq c_{2k+2}$. Therefore,
\begin{equation} \label{eq:pkstar}
\C P_{k+1}^* = ( d_1^*, d_2^*, \dots, d_{2t}^*, d_{2t+1}^*,c_{2k+2},c_{2k+3}), \quad \ep(\C P_{k+1}) = \left(\ep(\C P_{k})_{0}, \dots, \ep(\C P_{k})_{t}, +\right)
\end{equation}
Now 
\begin{align*}
&I^{G}(\la[c_{2k+2},c_{2k+3}]^+ \ \boxtimes\  \C M(\C P_{k}, \ep(\C P_{k}))) && \\
\approx\ &I^G \begin{pmatrix} \la[c_{2k+2},c_{2k+3}]^+ \boxtimes \la[d_{2i_1},d_{2i_1-1}]^-\boxtimes \cdots \boxtimes \la[d_{2i_r},d_{2i_r-1}]^- \boxtimes \quad\quad \quad \quad\quad \quad \\ 
\quad\quad \quad \quad\quad \quad \quad\quad  \la[d_2^*,d_3^*]^{\ep(\C P_{k})_1} \boxtimes \cdots \boxtimes  \la[d_{2t}^*,d_{2t+1}^*]^{\ep(\C P_{k})_{t}} \boxtimes\ \mathrm{T}(d_1^*| \ep(\C P_{k})_0) \end{pmatrix}\\
\approx\ &I^G\begin{pmatrix}\la[d_{2i_1},d_{2i_1-1}]^-\boxtimes \cdots \boxtimes \la[d_{2i_r},d_{2i_r-1}]^-\boxtimes \quad\quad \quad\quad\quad \quad \quad\quad \quad\quad\quad \quad\quad\quad \quad \quad\quad \quad \quad\quad \quad\\
\quad\quad \quad\quad \quad\quad   \la[d_2^*,d_3^*]^{\ep(\C P_k)_1} \boxtimes \cdots \boxtimes \la[d_{2t}^*,d_{2t+1}^*]^{\ep(\C P_{k})_{t}} \boxtimes \la[c_{2k+2},c_{2k+3}]^+ \boxtimes\ \mathrm{T}(d_1^*| \ep(\C P_{k})_0) \end{pmatrix} \\
\approx\ &\C M(\C P_{k+1},\ep(\C P_{k+1})).
\end{align*}
Here the last $\approx$ comes from Remark \ref{rmk:parastructure} and \eqref{eq:pkstar}.

\bigskip
\noindent \underline{(ii) $c_{2k-1} = c_{2k}$:} For each  $\C P_{k} \in Norm(\C O_{k})$,
let
$$\C P_{k+1}^u := (d_1, d_2, \dots, d_{2s}, d_{2s+1}, c_{2k+2}, c_{2k+3}),$$
with $d_{2s+1} = c_{2k+1} = c_{2k+2}$. So we also let
$$\C P_{k+1}^d := (d_1, d_2, \dots, d_{2s}', d_{2s+1}', c_{2k+2}', c_{2k+3}').$$
by performing a fundamental degeneration on $d_{2s} \geq d_{2s+1} = c_{2k+2} \geq c_{2k+3}$. Then
\[
Norm(\C O_{k+1}) = \bigcup_{\C P_{k} \in Norm(\C O_{k})} \{ \C P_{k+1}^u, \C P_{k+1}^d\}.
\]
As before, we look at $\C P_{k+1}^u$ and $\C P_{k+1}^d$ more closely: Firstly, $\tau_0(\C P_{k+1}^u) = \tau_0(\C P_{k}) \cup \{k+1\}$ since $d_{2s+1} = c_{2k+1} = c_{2k+2}$. Therefore,
\begin{equation} \label{eq:pkustar}
(\C P_{k}^u)^* = ( d_1^*, d_2^*, \dots, d_{2t}^*,c_{2k+3}), \quad \ep(\C P_{k+1}^u) = \left(\ep(\C P_{k})_{0}, \dots, -\ep(\C P_{k})_{t}\right).
\end{equation}
On the other hand, $\tau_0(\C P_{k+1}^d) = \tau_0(\C P_{k})$ since $d_{2s+1}' \neq c_{2k+2}'$, and hence
\begin{equation} \label{eq:pkdstar}
(\C P_{k}^d)^* = ( d_1^*, d_2^*, \dots, d_{2t}^*,d_{2s+1}',c_{2k+2}', c_{2k+3}'), \quad \ep(\C P_{k+1}^d) = \left(\ep(\C P_{k})_{0}, \dots, \ep(\C P_{k})_{t}, +\right).
\end{equation}

Consider
\begin{equation} \label{eq:updown2}
\begin{aligned}
&I^{G}(\la[c_{2k+2},c_{2k+3}]^+ \boxtimes\ \C M(\C P_{k}, \ep(\C P_{k})))  \\
\approx\ &I^G\begin{pmatrix} \la[c_{2k+2},c_{2k+3}]^+ \boxtimes \la[d_{2i_1},d_{2i_1-1}]^-\boxtimes \cdots \boxtimes \la[d_{2i_r},d_{2i_r-1}]^-  \boxtimes \quad\quad \quad \quad\quad \quad \\
\quad\quad \quad \quad\quad \quad\quad \quad\la[d_2^*,d_3^*]^{\ep(\C P_k)_1} \boxtimes \cdots \boxtimes \la[d_{2t}^*,d_{2t+1}^*]^{\ep(\C P_{k})_{t}} \boxtimes\ \mathrm{T}(d_1^*| \ep(\C P_{k})_0) \end{pmatrix}\\
\approx\ &I^G\begin{pmatrix}\la[d_{2i_1},d_{2i_1-1}]^-\boxtimes \cdots \boxtimes \la[d_{2i_r},d_{2i_r-1}]^-\boxtimes \quad\quad \quad\quad\quad \quad\quad\quad \quad \quad\quad \quad\quad\quad \quad \quad\quad \quad\\
\quad\quad \quad \la[d_2^*,d_3^*]^{\ep(\C P_k)_1} \boxtimes \cdots  \boxtimes \la[d_{2t}^*,d_{2t+1}^*]^{\ep(\C P_{k})_{t}} \boxtimes \la[c_{2k+2},c_{2k+3}]^+ \boxtimes\ \mathrm{T}(d_1^*| \ep(\C P_{k})_0) \end{pmatrix}
\end{aligned}
\end{equation}
Look at the last part of the last module in \eqref{eq:updown2}. If $t = 0$, then $d_1^*= d_{2t+1}^* = c_{2k+1} = c_{2k+2}$. By applying the 3-column lemma (Lemma \ref{lem:3}) with $(d_1^* = c_{2k+2} \geq c_{2k+3})$, the last module of \eqref{eq:updown2} is $\approx$ to
\begin{align*}
&I^G\left( \cdots \boxtimes \la[c_{2k+2},d_1^*]^- \boxtimes\ \mathrm{T}(c_{2k+3}| -\ep(\C P_{k})_1) \right) \oplus
I^G\left( \cdots \boxtimes \la[c_{2k+2}',c_{2k+3}']^+ \boxtimes\ \mathrm{T}(d_1'| \ep(\C P_{k})_1)\right)\\
\approx\ &\C M(\C P^u_{k+1};\ep(\C P^u_{k+1})) \oplus \C M(\C P^d_{k+1},\ep(\C P^d_{k+1}))
\end{align*}
where the last $\approx$ follows from \eqref{eq:pkustar} and \eqref{eq:pkdstar}.

\medskip
On the other hand, if $t > 0$, then $d_{2t+1}^* = c_{2k+1} = c_{2k+2}$. By applying the 5-column lemma (Lemma \ref{lem:5}) with $(d_1^* > d_{2t}^* \geq d_{2t+1}^* = c_{2k+2} \geq c_{2k+3})$, the last module is $\approx$ to
\begin{align*}
&I^G\left( \cdots \boxtimes \la[c_{2k+2},d_{2t+1}^*]^- \boxtimes \la[d_{2t}^*,c_{2k+3}]^{-\ep(\C P_{k})_{t}} \boxtimes\ \mathrm{T}(d_1^*| \ep(\C P_{k})_0)\right) \oplus \\
&I^G\left( \cdots \boxtimes \la[d_{2t}',d_{2t+1}']^{\ep(\C P_{k})_{t}} \boxtimes \la[c_{2k+2}',c_{2k+3}']^+ \boxtimes\ \mathrm{T}(d_1^*| \ep(\C P_{k})_0) \right)
\end{align*}
which is also $\approx$ to $\C M(\C P_{k+1}^u, \ep(\C P_{k+1}^u)) \oplus \C M(\C P_{k+1}^d, \ep(\C P_{k+1}^d))$.

\bigskip
By summing up all $\C P_{k} \in Norm(\C O_{k})$ in all the cases above, one always have
\begin{align*}
&\ \ \bigoplus_{\C P_{k+1} \in Norm(\C O_{k+1})} \C M(\C P_{k+1}, \ep(\C P_{k+1})) \\
&\approx\ 
\bigoplus_{\C P_{k} \in Norm(\C O_{k})} I^{G}(\la[c_{2k+2},c_{2k+3}]^+ \boxtimes\ \C M(\C P_{k}, \ep(\C P_{k})))\\
& \approx\  I^{G}(\la[c_{2k+2},c_{2k+3}]^+\boxtimes\la[c_2,c_3]^+\boxtimes\cdots\boxtimes\la[c_{2k},c_{2k+1}]^+ \boxtimes\ \mathrm{T}(c_1|+))\\
& \approx\  \Gamma(\C O_{k+1})
\end{align*}
for all even, generic orthogonal orbits. The proof for symplectic orbits is similar (and easier) using the 4-column lemma (Lemma \ref{lem:4}). Therefore, Theorem \ref{thm:main} follows. \qed

\bigskip
As in \cite{BW1}, the same arguments apply to odd orbits, and for general orbits involving 
both odd and even columns. We skip the proofs there.

\bigskip
Finally, we discuss how one can obtain the character formula of $\C B(\ovl{\C O})$ for all classical nilpotent orbits.
By Theorem \ref{thm:reduce}, the character formula of $\C B(\ovl{\C O})$ can be obtained from that of
$\C B(\ovl{\C O_{gen}})$. Using \eqref{eq:parameter} and the character formulas of unipotent representations given in \cite{BV2},
one can get the character formulas for the distinguished modules $\C M(\C P;\ep(\C P))$ for all $\C P \in Norm(\C O_{gen})$.
Consequently, the character formula of $\C B(\ovl{\C O_{gen}})$ can be obtained by summing up the formulas 
$\displaystyle \C B(\ovl{\C O_{gen}}) \cong \bigoplus_{\C P \in Norm(\C O_{gen})} \C M(\C P;\ep(\C P)).$
As a consequence, one can calculate the multiplicities of {\bf all} $K_c-$types in $\C B(\ovl{\C O}) \cong R(\ovl{\C O})$.
Explicit calculations are carried out in \cite{Wo1}. 

We end this paper by an example of
the generic orbit $\C O = (4,2,2,0)$ in $G = Sp(8,\mathbb{C})$ (with non-normal closure):

\begin{example}
Let $\C O = (4,2,2,0)$ be a generic orbit in $G = Sp(8,\mathbb{C})$. The infinitesimal character for $\C B(\ovl{\C O})$ is 
$\la_{\C O} = (2,1,1,0)$. The character formula of $\C B(\ovl{\C O})$ is given by:
\begin{align*}
\frac{1}{2}\left[ \sum_{w_1 \in C_2 \times D_1 \times C_1}\mathrm{sgn}(w_1) X(
(21, 0, 1);w_1(21, 0, 1)) + \sum_{w_2 \in D_3 \times C_1}\mathrm{sgn}(w_2) X(
(210, 1);w_2(210, 1))\right] 
\end{align*}
Upon restricting to $K$, the formula can be rewritten as:
\begin{align*}
\C B(\ovl{\C O}) \cong &\mathrm{Ind}_T^K(\mathbb{C}_{0000})
- 2\mathrm{Ind}_T^K(\mathbb{C}_{1100})
-\mathrm{Ind}_T^K(\mathbb{C}_{2000})
+4\mathrm{Ind}_T^K(\mathbb{C}_{2110}) \\
-2&\mathrm{Ind}_T^K(\mathbb{C}_{2211})
-\mathrm{Ind}_T^K(\mathbb{C}_{2200})
+\mathrm{Ind}_T^K(\mathbb{C}_{2222})
+2\mathrm{Ind}_T^K(\mathbb{C}_{3210})\\
-2&\mathrm{Ind}_T^K(\mathbb{C}_{3221})
-\mathrm{Ind}_T^K(\mathbb{C}_{4110})
+\mathrm{Ind}_T^K(\mathbb{C}_{4211})
+\mathrm{Ind}_T^K(\mathbb{C}_{4200})
-\mathrm{Ind}_T^K(\mathbb{C}_{4220}).
\end{align*}
On the other hand, the character formula of $\C B(\C O)$ in \eqref{eq:sunip} 
can be written as:
\begin{align*}
\sum_{w \in C_2 \times A_1}\mathrm{sgn}(w_1) X(
(21, 10);w(21, 10))
\end{align*}
whose restriction to $K$ is given by
\begin{align*}
\C B(\C O) \cong\   &\mathrm{Ind}_T^K(\mathbb{C}_{0000})
- 2\mathrm{Ind}_T^K(\mathbb{C}_{1100})
+\mathrm{Ind}_T^K(\mathbb{C}_{1111})
-\mathrm{Ind}_T^K(\mathbb{C}_{2000}) \\
+&\mathrm{Ind}_T^K(\mathbb{C}_{2110})
+2\mathrm{Ind}_T^K(\mathbb{C}_{3100})
-2\mathrm{Ind}_T^K(\mathbb{C}_{3111})
-\mathrm{Ind}_T^K(\mathbb{C}_{3300})\\
+&\mathrm{Ind}_T^K(\mathbb{C}_{3311})
-\mathrm{Ind}_T^K(\mathbb{C}_{4000})
+\mathrm{Ind}_T^K(\mathbb{C}_{4110})
+\mathrm{Ind}_T^K(\mathbb{C}_{4200})
-\mathrm{Ind}_T^K(\mathbb{C}_{4211}).
\end{align*}
The difference between the formulas occurs at $\mathbb{C}_{1111}$. This implies
the diminutive $K_c-$type $V_{(1111)}$ has different multiplicities in $R(\ovl{\C O})$ and in $R(\C O)$.
More precisely, $Norm(\C O) = \{\C O, (4,4)\}$, and the difference between $R(\ovl{\C O})$ and $R(\C O)$ is
exactly equal to
$$\C M(44,-)|_K = \mathrm{Ind}_{GL(4)}^{Sp(8)}(\det)|_K \cong  \sum_{w \in A_3}\mathrm{sgn}(w) \mathrm{Ind}_T^K(
(210-1)-w(10-1-2)).$$
\end{example}

\end{document}